\numberwithin{equation}{section}
\theoremstyle{plain}
\newtheorem{thm}{Theorem}[section]
\newtheorem{lem}{Lemma}[section]
\newtheorem{prop}{Proposition}[section]
\newtheorem{rem}[thm]{Remark}
\newtheorem{defn}{Definition}[section]
\newtheorem{eg}{Example}[section]
\title{Using Tropical Degenerations For Proving The Nonexistence Of Certain Nets}
\author{Mustafa Hakan G\"UNT\"URK\"UN \\ hakan.gunturkun@gediz.edu.tr  \\ \\ Ali Ula\c{s} \"Ozg\"ur K\.I\c{S}\.ISEL \\ akisisel@metu.edu.tr}
\date{}
\begin{document}
\maketitle
	\begin{abstract} A net is a special configuration of lines and points in the projective plane. There are certain restrictions on the number of its lines and points. We proved that there cannot be any (4,4) nets in $\mathbb{C}P^2$. In order to show this, we use tropical algebraic geometry. We tropicalize the hypothetical net and show that there cannot be such a configuration in  $\mathbb{C}P^2$.\\ \\
Keywords: line arrangements; k-nets; Latin squares; tropical curve; tropicalization

	\end{abstract}

\section{Introduction}
	A finite hyperplane arrangement is a finite set of hyperplanes in a projective space over a field. If the space is the projective plane, then the arrangement is called a line arrangement. A net is a special line configuration in the projective plane. There are some restrictions on the structure of nets discovered by S. Yuzvinsky and some open problems remain.

One way to understand tropical algebraic geometry is by looking at certain limits of complex algebraic varieties under the logarithm map. It may be easier to deal with the tropical counterparts of the classical problems because we can use combinatorics extensively on these simpler objects. 

In this note we show that there cannot be any (4,4)-nets in $\mathbb{C}P^2$. To show this we take a hypothetical net, we tropicalize its lines and points. We draw some of them on the tropical plane. Then by using the tropical picture and intersection relations, we find the possible locations of some of the other lines and points. This leads to a contradiction which shows the nonexistence of (4,4)-nets in $\mathbb{C}P^2$.

\section{Preliminaries}
\subsection{Nets}

 \begin{defn}
 
 	Let $k>1$ be a positive integer and $\mathbb{P}^2$ the complex projective plane. Let $\mathcal{A}_i$ be a finite set of lines for each $i \in\{1\ldots k\}$, and  $\mathcal{X}$ a finite set of points.

        The collection $(\mathcal{A}_1, \ldots , \mathcal{A}_k, \mathcal{X})$ is called a $\textbf{k-net}$ if the following are satisfied:
     
        \begin{enumerate}
          \item When $i\neq j$, $\mathcal{A}_i$ and $\mathcal{A}_j$ are disjoint.
          \item If $i\neq j$,  $\ell \in \mathcal{A}_i$ and $m\in \mathcal{A}_j$ then $\ell \cap m \in \mathcal{X}$.
          \item For every $p\in \mathcal{X}$ and $i\in \{1, \ldots, k\}$ there exists a unique $\ell \in \mathcal{A}_i$ such that $p\in \ell$.
        \end{enumerate} 
        
        \end{defn}
        
\begin{eg} The following is an example of a 3-net. 
	$$\mathcal{A}_1=\{\ell_{11}, \ell_{12}\}$$
	$$\mathcal{A}_2=\{\ell_{21}, \ell_{22}\}$$
	$$\mathcal{A}_3=\{\ell_{31}, \ell_{32}\}$$
	$$\mathcal{X}=\{p_{11}, p_{12}, p_{21}, p_{22}\}$$
	The points are indexed according to the rule $\ell_{1i}\cap \ell_{2j} = p_{ij}$

	\begin{figure}[h] \begin{center} \resizebox{8cm}{6cm} {\includegraphics{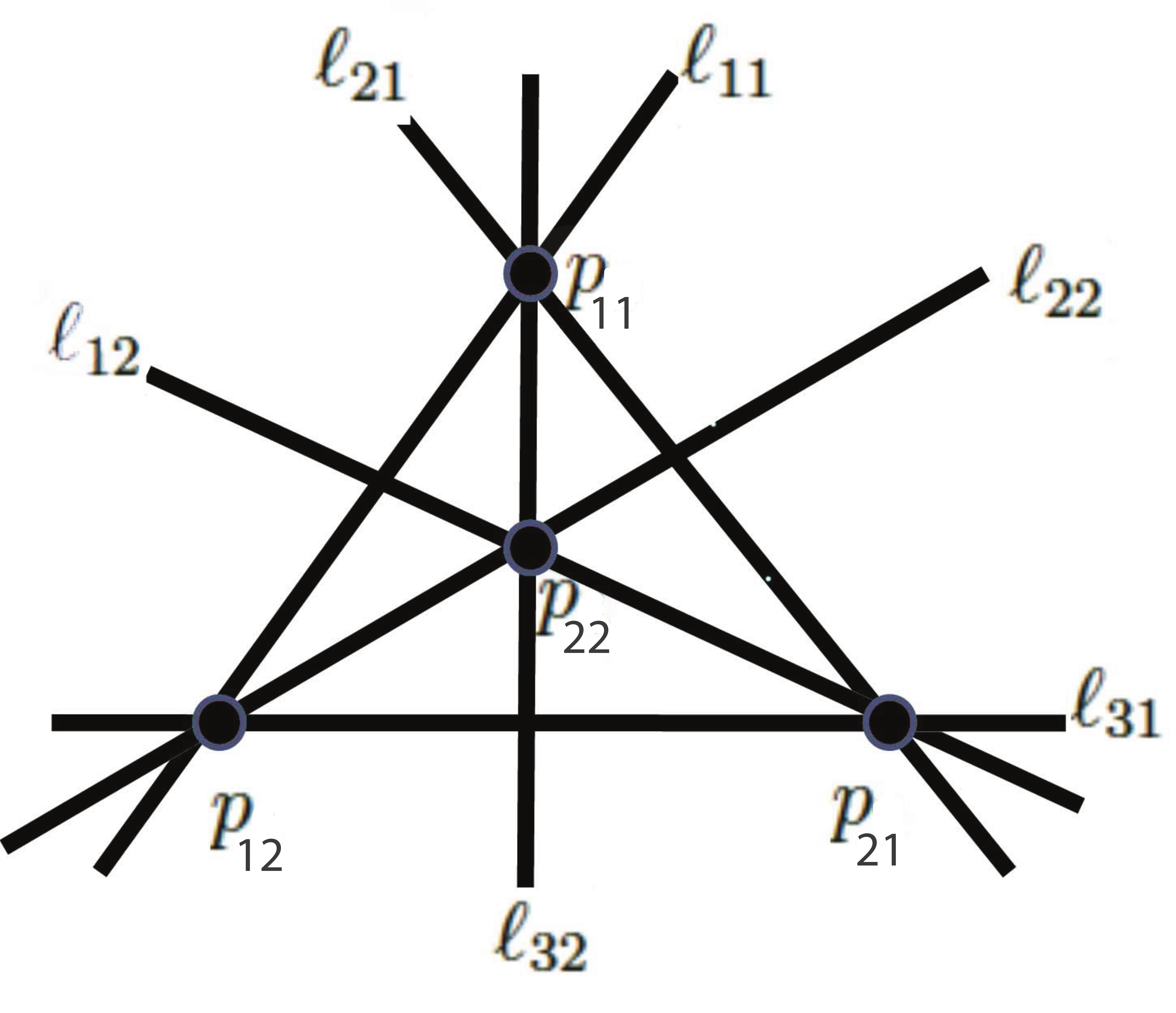}} \caption{A 3-Net} \label{fig:anet} \end{center}\end{figure}
\end{eg}

%\newpage

\subsubsection{Some Properties}

\begin{prop}Let $k\geq 3$. $(\mathcal{A}_1,\ldots,\mathcal{A}_k, \mathcal{X})$ be a $k$-net. Then the following properties hold:
		\begin{enumerate}
			\item $|\mathcal{A}_i|=|\mathcal{A}_j| \ \forall i,j$
          		\item $| \mathcal{X} | = |\mathcal{A}_1|^2$
		\end{enumerate}
        \end{prop}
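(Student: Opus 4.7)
The plan is to establish both properties directly from the axioms of a $k$-net, the only nontrivial use of the hypothesis $k \geq 3$ occurring in part (1). The guiding observation is that any line in a third class intersects every line of $\mathcal{A}_i$ in a point of $\mathcal{X}$, and these intersection points are redistributed by the unique-line condition so as to set up a bijection between any two classes.

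For part (1), I would fix distinct indices $i, j$ and, invoking $k \geq 3$, pick a third index $r$ together with a line $\ell \in \mathcal{A}_r$. Define $\phi \colon \mathcal{A}_i \to \mathcal{A}_j$ by letting $\phi(m)$ be the unique line in $\mathcal{A}_j$ passing through $\ell \cap m \in \mathcal{X}$; this is well-defined by axioms (2) and (3). For injectivity, if $\phi(m_1) = \phi(m_2) = n$ then $\ell \cap m_1$ and $\ell \cap m_2$ both lie on the distinct lines $\ell$ and $n$, so they coincide at a single point $p$, and then axiom (3) applied in $\mathcal{A}_i$ forces $m_1 = m_2$. For surjectivity, given $n \in \mathcal{A}_j$, set $p = \ell \cap n \in \mathcal{X}$, take the unique $m \in \mathcal{A}_i$ through $p$, and note that since $\ell \neq m$ both pass through $p$ we have $\ell \cap m = p \in n$, so $\phi(m) = n$.

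For part (2), I would consider the evaluation map $\Psi \colon \mathcal{A}_1 \times \mathcal{A}_2 \to \mathcal{X}$ defined by $(\ell, m) \mapsto \ell \cap m$, which is well-defined by axiom (2). It is injective since $\Psi(\ell, m) = \Psi(\ell', m') = p$ forces $\ell = \ell'$ and $m = m'$ by applying axiom (3) in each of $\mathcal{A}_1$ and $\mathcal{A}_2$. It is surjective because for $p \in \mathcal{X}$ axiom (3) supplies the unique $\ell \in \mathcal{A}_1$ and the unique $m \in \mathcal{A}_2$ through $p$, and these two distinct lines can only meet at $p$. Combining this bijection with part (1) yields $|\mathcal{X}| = |\mathcal{A}_1| \cdot |\mathcal{A}_2| = |\mathcal{A}_1|^2$. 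There is no serious obstacle in either step; the only subtlety is noticing that part (1) genuinely requires a line from a third class in order to transport intersection data between $\mathcal{A}_i$ and $\mathcal{A}_j$, which is exactly where the hypothesis $k \geq 3$ is used.
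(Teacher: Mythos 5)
Your proof is correct, but part (1) is argued by a genuinely different route than the paper's. The paper establishes the single counting identity $|\mathcal{X}|=|\mathcal{A}_i||\mathcal{A}_j|$ for every pair $i\neq j$ (by identifying $\mathcal{X}$ with the set of pairwise intersection points and using axiom (3) to see that distinct pairs of lines give distinct points), and then deduces $|\mathcal{A}_i|=|\mathcal{A}_j|$ purely arithmetically: for a third index $\ell$ one has $|\mathcal{A}_\ell||\mathcal{A}_i|=|\mathcal{X}|=|\mathcal{A}_j||\mathcal{A}_\ell|$, and cancelling $|\mathcal{A}_\ell|$ gives the claim. You instead build an explicit bijection $\mathcal{A}_i\to\mathcal{A}_j$ by projecting through a transversal line $\ell$ of a third class --- essentially the classical perspectivity between two pencils --- and your part (2) is the paper's counting identity recast as the bijection $\mathcal{A}_1\times\mathcal{A}_2\to\mathcal{X}$. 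The paper's version is more economical, since one identity drives both conclusions; your version is more constructive for part (1) and sidesteps the division by $|\mathcal{A}_\ell|$ (which the paper performs without comment), though both arguments equally rely on the third class being nonempty, a degenerate case neither proof addresses and which does not arise for genuine nets. Both uses of $k\geq 3$ are the same in substance: a third family is needed to transport information between $\mathcal{A}_i$ and $\mathcal{A}_j$.
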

        
        \begin{proof} Let $\mathcal{A}_i$ and $\mathcal{A}_j$ be two of the line sets, $i\neq j$.
        		\begin{enumerate}
			\item By the second statement of \textit{Definition 2.1} all the intersections of the lines in $\mathcal{A}_i$ and $\mathcal{A}_j$ are in $\mathcal{X}$, hence $\mathcal{A}_i \cap \mathcal{A}_j \subseteq \mathcal{X} $. On the other hand each element of $\mathcal{X}$ is an element of $\mathcal{A}_i \cap \mathcal{A}_j$, hence $\mathcal{X} \subseteq \mathcal{A}_i \cap \mathcal{A}_j$. So $\mathcal{X} = \mathcal{A}_i \cap \mathcal{A}_j$. Furthermore note that $|\mathcal{A}_i \cap \mathcal{A}_j| = |\mathcal{A}_i||\mathcal{A}_j|$ by the third statement of \textit{Definition 2.1}. Therefore $ |\mathcal{X}|=|\mathcal{A}_i||\mathcal{A}_j|$. Since $k\geq 3$, choose $\ell \neq i,j$. But then $|\mathcal{A}_\ell||\mathcal{A}_i|=|\mathcal{A}_j||\mathcal{A}_\ell|$ which means that $|\mathcal{A}_i|=|\mathcal{A}_j| \ \forall i,j$.
          		\item Using $|\mathcal{X}|=|\mathcal{A}_i||\mathcal{A}_j|$ and $|\mathcal{A}_i|=|\mathcal{A}_j|=|\mathcal{A}_1|$, we get $| \mathcal{X} | = |\mathcal{A}_1|^2$.
		\end{enumerate}
        \end{proof}
        
        If $|\mathcal{A}_i|=d$ then $|\mathcal{X}| = d^2$. From here on we shall use the phrase ``$(k,d)$-net'' instead of ``$k$-net''. For instance the 3-net above will be called a (3,2)-net.

\subsubsection{Pencils of curves and $(k,d)$-nets}

        \begin{prop} \cite{yuz}
        Let $\{\mathcal{A}_i\}^{k}_{i=1}$ be disjoint sets of lines each of which includes d different lines, $|\mathcal{A}_i \cap \mathcal{A}_j| = d^2$ $for \ all \ i\neq j$, and $\{C_i\}^{k}_{i=1}$ be the curves of degree d formed by the union of lines in each set. Then the $\mathcal{A}_i$ are the sets of lines of a $(k,d)$-net if and only if  the $C_i$ are the fibers of a pencil formed by any two of them.
        \end{prop}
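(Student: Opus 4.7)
The plan is to prove both implications using Bezout's theorem together with the classical Max Noether AF+BG theorem. As setup, fix defining polynomials $f_i$ of degree $d$ for each $C_i = \bigcup_{m \in \mathcal{A}_i} m$. For any $i \neq j$, the hypothesis $|\mathcal{A}_i \cap \mathcal{A}_j| = d^2$ combined with Bezout forces the scheme $C_i \cap C_j$ to consist of exactly $d^2$ transversal points, each of multiplicity one.

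For the forward direction, assume $(\mathcal{A}_1, \ldots, \mathcal{A}_k, \mathcal{X})$ is a $(k,d)$-net. By Proposition 2.1, $\mathcal{X} = C_a \cap C_b$ for any $a \neq b$ and $|\mathcal{X}| = d^2$. Axiom (3) of the net definition says each $p \in \mathcal{X}$ lies on a line of every $\mathcal{A}_s$, hence on $C_s$. Thus each $C_s$ contains the complete intersection $C_i \cap C_j$. I would then invoke Max Noether's AF+BG theorem: because this intersection is transversal, any plane curve of degree $d$ passing through it can be written as $A f_i + B f_j$ with $\deg A = \deg B = d - d = 0$, i.e., with $A$ and $B$ constants. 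Hence $C_s$ lies in the pencil generated by $C_i$ and $C_j$.

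For the reverse direction, assume every $C_s$ lies in the pencil $\langle C_i, C_j \rangle$ for some fixed $i \neq j$. Let $\mathcal{X}$ be the $d^2$ base points, which coincide with $C_i \cap C_j = \mathcal{A}_i \cap \mathcal{A}_j$. I would verify the three net axioms in turn. Disjointness of the $\mathcal{A}_i$ is assumed. For axiom (2), the $d^2$ pairwise intersections of lines in $\mathcal{A}_a$ and $\mathcal{A}_b$ lie in both $C_a$ and $C_b$, and $C_a \cap C_b$ is again the base locus $\mathcal{X}$ because $C_a, C_b$ are members of the pencil. The substantive step is axiom (3): for each $p \in \mathcal{X}$ and each $s$, exactly one line of $\mathcal{A}_s$ passes through $p$. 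Existence follows from $p \in C_s$. For uniqueness, any line $n \in \mathcal{A}_s$ meets the $d$ lines of $\mathcal{A}_i$ in $d$ distinct points (using $|\mathcal{A}_s \cap \mathcal{A}_i| = d^2$), so $n$ contains at most $d$ base points; summing over the $d$ lines of $\mathcal{A}_s$ gives at most $d^2$, matching $|\mathcal{X}|$, and pigeonhole forces each base point to lie on exactly one such line.

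The main obstacle is the application of Max Noether's AF+BG theorem in the forward direction. This classical result demands transversal (or more generally, complete-intersection) behavior at each common zero of $f_i$ and $f_j$, and I would need to argue carefully that the hypothesis $|\mathcal{A}_i \cap \mathcal{A}_j| = d^2$ does deliver simple crossings, precluding concurrencies such as two lines of $\mathcal{A}_i$ meeting a line of $\mathcal{A}_j$ at a single point. Once that step is in place, the rest of the proof is essentially bookkeeping with Bezout.
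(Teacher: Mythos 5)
Your argument is correct and takes essentially the same route as the paper, whose entire proof is a pointer to Noether's AF+BG theorem (Lemma 3.1 of Yuzvinsky); the AF+BG application you describe, with $\deg A=\deg B=0$ forcing membership in the pencil, is exactly that argument filled in. The transversality worry you flag at the end is already settled by your own Bezout count in the setup: an intersection of total multiplicity $d^2$ supported on $d^2$ distinct points must consist of simple crossings, which rules out the concurrencies you mention.
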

        
	\begin{proof}
	The proof follows from Noether's AF+BG theorem. See \textit{Lemma 3.1} of \cite{yuz}.
	\end{proof}
	
	\begin{eg}
	The \textit{Figure 2} is a $(3,2)$-net. Suppose the equations of the lines involved are: 
	
	\begin{eqnarray*}
	\ell_{11} : y-1=0 \\
	\ell_{12} : y+1=0 \\
	\ell_{21}: x+1=0 \\
	\ell_{22}: x-1=0 \\
	\ell_{31}:y-x=0 \\
	\ell_{32}:y+x=0 \\
	\end{eqnarray*}
	
	\begin{figure} \begin{center} \resizebox{8cm}{6cm}{\includegraphics{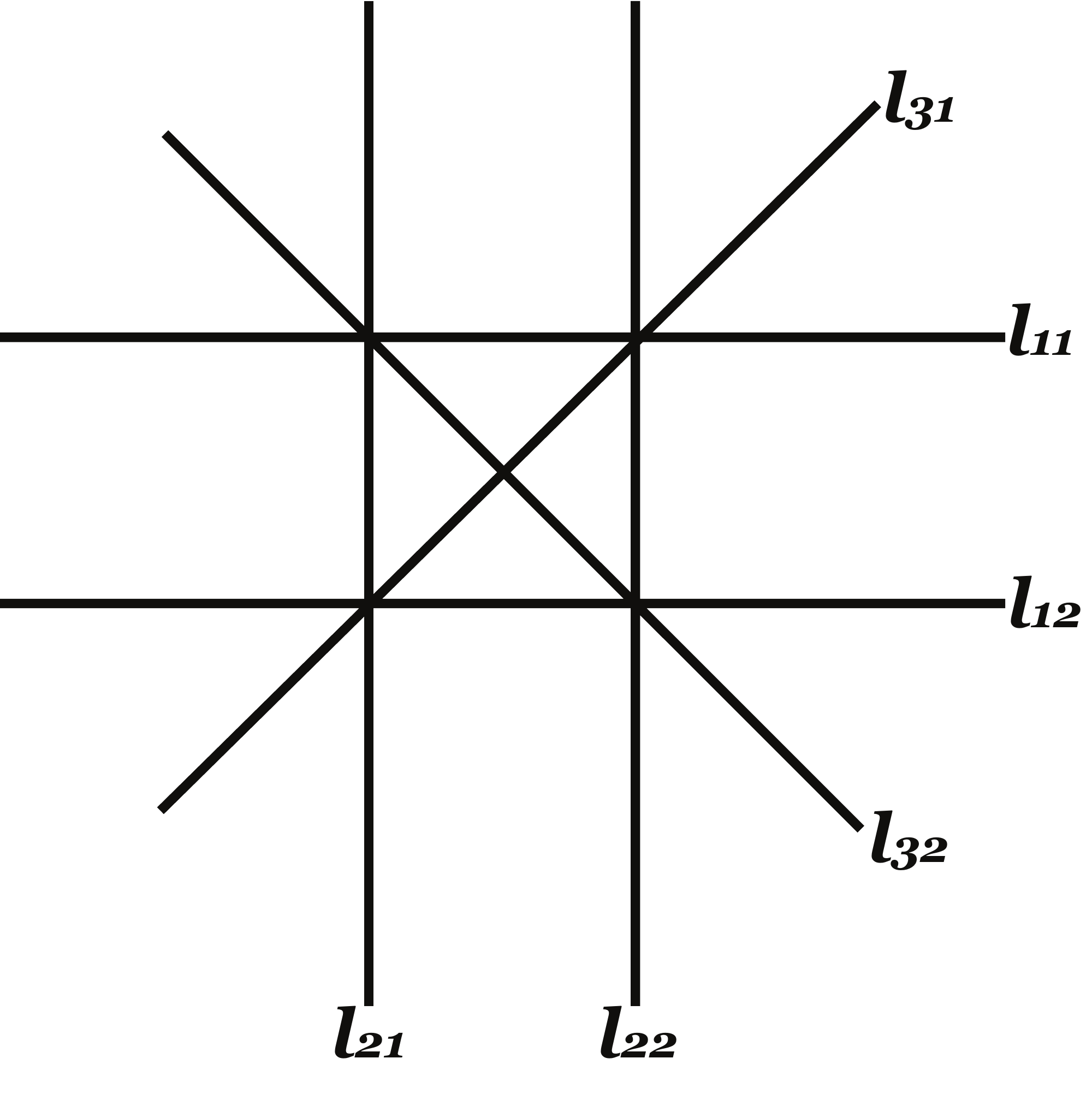}} \caption{A (3,2)-Net} \label{fig:pencil} \end{center}\end{figure}
	
	If we take any two of the classes and form a pencil the other will be an element of that pencil. For instance if we take $\lambda (x^2-1)+\mu (y^2-1)=0$ as the pencil, then  \\ $[\lambda: \mu ] \in \{[1:0], [0:1],[1:-1]\}$ give the fibers. We show this in \textit{Figure 3}.
	
	\begin{figure} \begin{center} \resizebox{8cm}{6cm}{\includegraphics{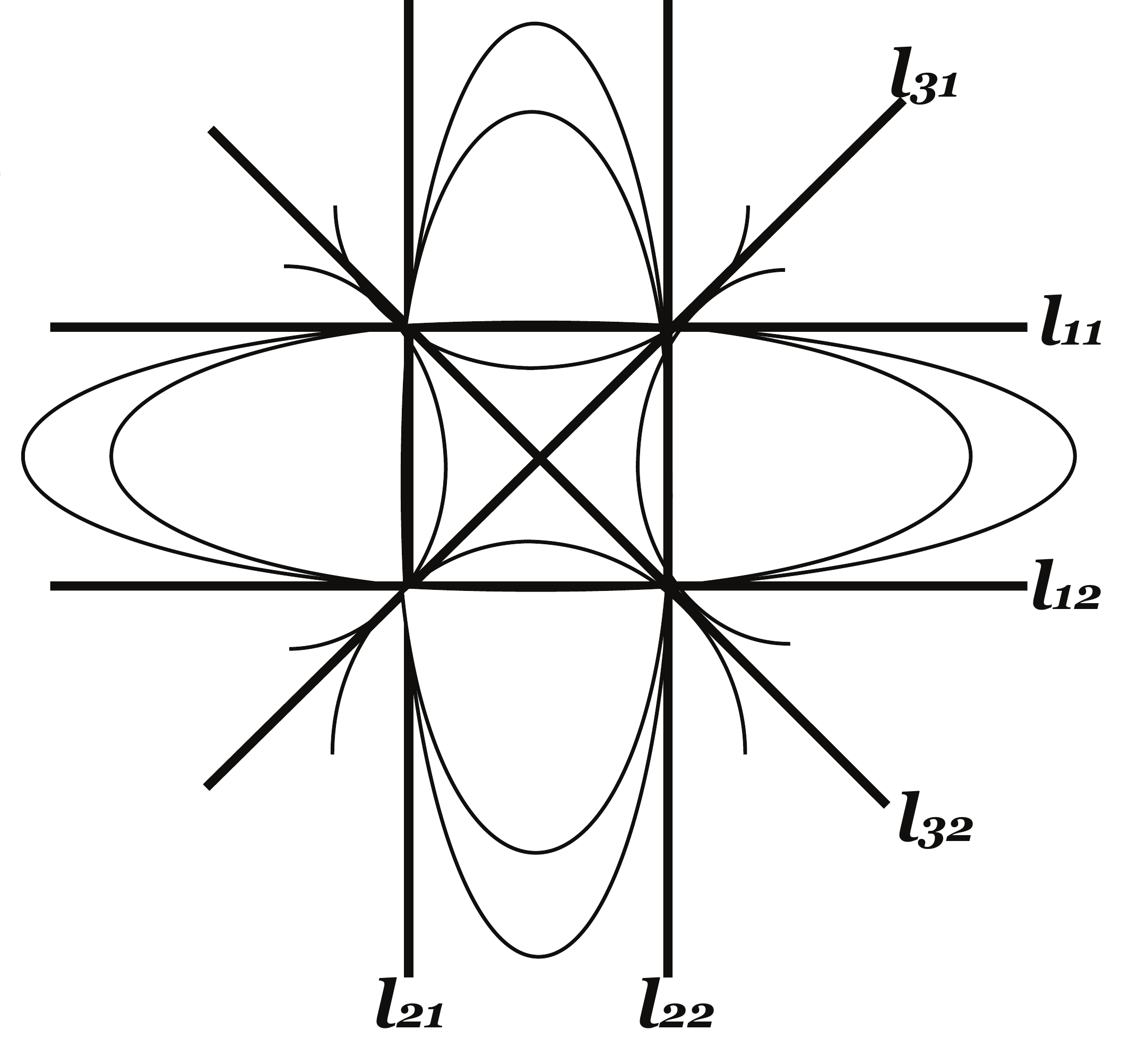}} \caption{A Pencil of Quadrics} \label{fig:pencil2} \end{center}\end{figure}
	\end{eg}
	
	\newpage
	
\subsubsection{The restrictions on $(k,d)$-nets}

	There does not exist any 1-net because of the definition of the net. If we take two sets including arbitrary numbers of lines and if we take all the intersections as the point set, it forms a $(2,d)$-net provided that no 3 of these lines are concurrent. Therefore the $k=2$ case is trivial. For $k=3$, $d=1$ we would have $\mathcal{X}=\{p\}$, $\mathcal{A}_i = \{\ell_i\}$ and $\ell_1, \ldots, \ell_k$ are concurrent. 
	 
        \begin{thm}{(S.Yuzvinsky)\cite{yuz}}: If a $(k,d)$-net where $d>1$ exists in $\mathbb{P}^2$ then $(k,d)$ must be one of the following:
        \begin{itemize}
          \item $k=3, \ d\geq 2$
          \item $k=4,\ d\geq 3$
          \item $k=5,\ d\geq 6$
        \end{itemize} \end{thm}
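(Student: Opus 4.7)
The plan is to analyze the full line arrangement $\mathcal{L} = \bigcup_{i=1}^{k} \mathcal{A}_i$ of a hypothetical net and apply Hirzebruch's inequality. The first step is to classify the multiple points of $\mathcal{L}$. Every $p \in \mathcal{X}$ lies on exactly one line from each of the $k$ classes by axiom (3), hence has multiplicity exactly $k$, giving $t_k \geq d^2$ (where $t_r$ counts the points of $\mathcal{L}$ of multiplicity exactly $r$). Any other multiple point $p$ must have all its concurrent lines drawn from a single class, since two concurrent lines from different classes would force $p \in \mathcal{X}$ by axiom (2), contradicting the uniqueness in (3). Thus the remaining ``intra-class'' multiple points are partitioned among the $k$ classes, and counting unordered pairs of lines within each class yields $\sum_{p\text{ intra}}\binom{r_p}{2} = k\binom{d}{2}$, with $r_p \leq d$ at any such point.

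Next I would invoke Hirzebruch's inequality for complex line arrangements,
\begin{equation*}
t_2 + \tfrac{3}{4} t_3 \;\geq\; n \;+\; \sum_{r \geq 5} (2r-9)\, t_r,
\end{equation*}
applied with $n = kd$. Putting $t_k \geq d^2$ on the right-hand side (nontrivial only when $k \geq 5$) and the optimistic bound $t_2 \leq k\binom{d}{2}$ on the left (equality precisely when every intra-class multiple point is a node), a straightforward algebraic simplification reduces the inequality to $d(6-k) \geq k$. This immediately rules out $k \geq 6$ and gives the boundary implications $k = 5 \Rightarrow d \geq 5$ and $k = 4 \Rightarrow d \geq 2$. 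The case $(k,d) = (4,2)$ is then killed by hand: Hirzebruch demands $t_2 \geq n = 8$, but the arrangement admits at most $k\binom{d}{2} = 4$ intra-class nodes, so no such net exists and $k = 4$ must force $d \geq 3$.

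The only remaining obstruction, and the one I expect to be the main technical difficulty, is sharpening $d \geq 5$ to $d \geq 6$ when $k = 5$, i.e., excluding a hypothetical $(5,5)$-net, since Hirzebruch alone is tight there. A promising route is to use the pencil structure of Proposition 2.2 directly: blow up the $d^2 = 25$ base points $\mathcal{X}$ to obtain a smooth surface $S$ fibered over $\mathbb{CP}^1$, then equate $\chi(S) = 3 + d^2 = 28$ with $(2-k)\,\chi(F_{\mathrm{gen}}) + \sum_{i=1}^{k} \chi(\widetilde{C}_i)$, where the generic smooth quintic fiber has $\chi(F_{\mathrm{gen}}) = 3d - d^2 = -10$ and each $\chi(\widetilde{C}_i)$ is determined by the intra-class incidence pattern of $\mathcal{A}_i$. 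Enumerating the finitely many possible incidence patterns of five lines in $\mathbb{CP}^2$ should complete the proof, but the combinatorial bookkeeping in this last step is genuinely delicate and is where the bulk of the work lies.
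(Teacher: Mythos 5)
First, a point of reference: the paper does not actually prove this theorem; its ``proof'' is the citation ``See Theorem 3.2 of \cite{yuz}.'' Yuzvinsky's own argument there is essentially the one you postpone to your $(5,5)$ endgame: blow up the $d^2$ base points of the pencil of Proposition 2.2, compare Euler numbers of the fibration $S\to\mathbb{CP}^1$, and bound $\chi$ of the completely reducible fibres. Run uniformly, that argument needs no case analysis and no enumeration of incidence patterns: since unaccounted singular fibres contribute non-negatively, $3+d^2\ \geq\ (2-k)(3d-d^2)+\sum_{i=1}^k\chi(\widetilde{C}_i)$ (note this must be an inequality, not the equality you wrote), and the uniform worst-case bound $\chi(\widetilde{C}_i)\geq 2d-\binom{d}{2}$ (lines of a class in general position) reduces it to $3(d-1)^2\geq k\binom{d}{2}$, i.e.\ $d(6-k)\geq 6$ --- which is the entire theorem, including $k=5\Rightarrow d\geq 6$, in one stroke. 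So the step you flag as the ``main technical difficulty'' is in fact the easy, self-contained route, and it makes the rest of your machinery unnecessary.

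The genuine gap is in the central inequality. What you quote as Hirzebruch's inequality, with coefficient $2r-9$, is not Hirzebruch's inequality: the classical statement has coefficient $r-4$ in the sum over $r\geq 5$. With $r-4$, your computation (taking $t_k\geq d^2$, $t_2\leq k\binom{d}{2}$) gives $d(8-k)\geq 3k$, which does \emph{not} exclude $k=6$ (it permits $d\geq 9$) or $k=7$ (it permits $d\geq 21$), so the theorem does not follow. The $2r-9$ version is true, but only as a consequence of the Bojanowski--Langer refinement $t_2+\tfrac34 t_3\geq n+\sum_{r\geq 5}\tfrac{r(r-4)}{4}t_r$, since $2r-9\leq\tfrac{r(r-4)}{4}$ for all $r$; and if you invoke that refinement you should use it at full strength, which yields $d(6-k)\geq 6$ directly and again renders your separate $(4,2)$ and $(5,5)$ patches superfluous. (Your classification of the multiple points --- $t_k\geq d^2$ from $\mathcal{X}$, all other multiple points intra-class with pair budget $k\binom{d}{2}$, and the by-hand elimination of $(4,2)$ --- is correct.) As written, then, the key step either fails outright or silently relies on a much stronger and unattributed input; either fix the citation and use the strong inequality throughout, or drop Hirzebruch entirely in favour of the elementary fibration count, which is both Yuzvinsky's proof and the shortest one.
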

        
       \begin{proof} See $Theorem\ 3.2$ at \cite{yuz}. \end{proof}      
       
       \subsubsection{The Main Problem}
       
	We can find $(3,d)$-nets for every $d$. For the construction of $(3,d)$-nets see \textit{Proposition 3.3} of \cite{yuz}.  J. Stipins proved in his dissertation that there cannot be any 5-nets \cite{stipins}. Other than these, there is only one 4-net known which is the $(4,3)$-net. In this note we showed that there cannot be any $(4,4)$-nets in $\mathbb{C}P^2$. We used tropical geometry to solve this problem.

\subsubsection{Latin Squares and $(k,d)$-nets}
	
		\begin{defn}[Latin Square] A $d\times d$ matrix such that there exists a bijection between each row and each column and the set $\{ 1,\ldots, d\}$ is called a $d\times d$ \textbf{Latin square}.\end{defn}
		
		\begin{defn}[Orthogonal Pairs] Let $\mathcal{L},\mathcal{L}'$ be two Latin squares. If one can find a bijection between the sets $\{1\ldots d \}\times \{1\ldots d \}$ and the set of pairs $\{(\mathcal{L}_{ij},\mathcal{L}'_{ij})\}$ then the pair of Latin squares ($\mathcal{L}$, $\mathcal{L}'$) is called an \textbf{orthogonal pair}.\end{defn}
		
		\begin{defn}[Orthogonal Set] A set of Latin squares  $\{\mathcal{L}_1,\ldots , \mathcal{L}_n\}$ such that ($\mathcal{L}_i$ , $\mathcal{L}_j$) is an orthogonal pair for all $1\le i<j \le n$ is called an \textbf{orthogonal set}. \end{defn}
	
	The following two propositions below are taken from \textit{Chapter 2} of \cite{stipins}.
	
	\begin{prop} Let $(\mathcal{A}_1,\ldots,\mathcal{A}_k, \mathcal{X})$ be a $(k,d)$-net. Then the set of Latin squares $\{\mathcal{M}_3,\ldots , \mathcal{M}_k\}$ below is an orthogonal set:
           $$l_{1i},l_{2j} \text{ and }\ l_{t(\mathcal{M}_t)_{ij}}\ pass\ through\ the\ same\ point. \ (*) \ where\ 3\leq t \leq k $$ 
	\end{prop}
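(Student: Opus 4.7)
The plan is to establish the proposition in two stages: first verify that each $\mathcal{M}_t$ is a well-defined Latin square, then verify pairwise orthogonality. Both stages reduce to careful bookkeeping with the three defining properties of a net.

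For the first stage, I would start by observing that for any indices $i,j \in \{1,\ldots,d\}$ the intersection $\ell_{1i}\cap \ell_{2j}$ is a single point of $\mathcal{X}$ by property (2) of Definition 2.1, and property (3) assigns to this point a unique line in $\mathcal{A}_t$, so the entry $(\mathcal{M}_t)_{ij}$ is well-defined as an element of $\{1,\ldots,d\}$. To see that row $i$ of $\mathcal{M}_t$ has distinct entries, fix two columns $j\neq j'$ and suppose the corresponding entries agreed. Then one line $\ell_{ta}\in\mathcal{A}_t$ would pass through two distinct points $\ell_{1i}\cap\ell_{2j}$ and $\ell_{1i}\cap\ell_{2j'}$ of $\ell_{1i}$, which forces $\ell_{ta}=\ell_{1i}$, contradicting property (1) since $\mathcal{A}_1\cap\mathcal{A}_t=\emptyset$. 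The column argument is identical (swapping the roles of $\mathcal{A}_1$ and $\mathcal{A}_2$). Since each row and column contains $d$ distinct elements of a $d$-element set, $\mathcal{M}_t$ is a Latin square.

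For the orthogonality stage, fix $3\le s<t\le k$ and consider the map $\Phi\colon (i,j)\mapsto \bigl((\mathcal{M}_s)_{ij},(\mathcal{M}_t)_{ij}\bigr)$ from $\{1,\ldots,d\}^2$ to itself. Both sets have $d^2$ elements, so injectivity suffices. If $\Phi(i,j)=\Phi(i',j')=(a,b)$, then the two points $p=\ell_{1i}\cap\ell_{2j}$ and $p'=\ell_{1i'}\cap\ell_{2j'}$ both lie on $\ell_{sa}$ and on $\ell_{tb}$. Since $s\neq t$, property (2) says $\ell_{sa}\cap\ell_{tb}$ is a single point of $\mathcal{X}$; thus $p=p'$. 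The pair $(i,j)$ is determined by $p$ (as the unique lines of $\mathcal{A}_1$ and $\mathcal{A}_2$ through $p$, again by property (3) applied with $i=1$ and $i=2$), so $(i,j)=(i',j')$. This establishes that $(\mathcal{M}_s,\mathcal{M}_t)$ is an orthogonal pair, completing the proof.

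There is no serious obstacle: the argument is essentially a translation of the three net axioms into the language of Latin squares. The only point that requires a little care is recognizing that one needs $|\mathcal{X}|=d^2$ (which was established in the earlier proposition) together with property (3) for the pencils $\mathcal{A}_1$ and $\mathcal{A}_2$ to guarantee that the indexing $(i,j)\leftrightarrow p$ is itself a bijection, so that the domain of $\Phi$ has the right size and $\Phi$'s injectivity translates back to distinct pairs $(i,j)$.
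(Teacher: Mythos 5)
Your proof is correct and complete. The paper itself gives no argument for this proposition --- it simply states it (together with its converse) and cites Chapter~2 of Stipins's thesis --- so there is no in-paper proof to compare against; what you have written is the standard argument and it fills that gap self-containedly. Both stages are sound: well-definedness of the entries via axioms (2) and (3), the Latin-square property via the ``two distinct points determine a line'' argument combined with disjointness of the classes, and orthogonality via injectivity of $\Phi$ on a finite set of the right cardinality. Two very small points you could make explicit: in the row argument, the distinctness of $\ell_{1i}\cap\ell_{2j}$ and $\ell_{1i}\cap\ell_{2j'}$ is itself a consequence of axiom (3) (if they coincided, two lines of $\mathcal{A}_2$ would pass through one point of $\mathcal{X}$); and in the orthogonality step, the fact that $\ell_{sa}\cap\ell_{tb}$ is a \emph{single} point uses that the two lines are distinct (guaranteed by disjointness of $\mathcal{A}_s$ and $\mathcal{A}_t$) rather than axiom (2) alone, which only places the intersection in $\mathcal{X}$. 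Neither affects the validity of the argument.
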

        
    The converse of the above is also true:

        \begin{prop} Let $\mathcal{A}_1=\{l_{11}, \ldots , l_{1d}\}, \mathcal{A}_2=\{l_{21}, \ldots , l_{2d}\}$ be two sets containing $d$ lines intersecting at $d^2$ points. Let $\mathcal{X} = \mathcal{A}_1 \cap \mathcal{A}_2$,\ $\{\mathcal{M}_3,\ldots , \mathcal{M}_k\}$ be an orthogonal Latin square set. Suppose that $(\mathcal{A}_1,\ldots,\mathcal{A}_k)$ be the sets of lines satisfying the incidence relations (*). Then $(\mathcal{A}_1,\ldots,\mathcal{A}_k, \mathcal{X})$ forms a $(k,d)$-net.\end{prop}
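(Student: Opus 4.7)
The plan is to verify the three defining conditions of a $(k,d)$-net one by one, translating each geometric incidence into a combinatorial statement about the orthogonal Latin squares $\mathcal{M}_t$ and then invoking the relation $(*)$ to pass back to the lines in $\mathbb{P}^2$.

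I would start with condition (2), that any two lines from different classes meet at a point of $\mathcal{X}$. The case $i=1$, $j=2$ is built into the definition of $\mathcal{X}$. For $l_{1a}\in\mathcal{A}_1$ and $l_{tr}\in\mathcal{A}_t$ with $t\geq 3$, the Latin-square row bijection produces a unique $b$ with $(\mathcal{M}_t)_{ab}=r$, and $(*)$ then puts $l_{1a}$, $l_{2b}$, $l_{tr}$ through the common point $l_{1a}\cap l_{2b}\in\mathcal{X}$; the case of $\mathcal{A}_2$ and $\mathcal{A}_t$ is symmetric in the column index. For $l_{tr}\in\mathcal{A}_t$ and $l_{t's}\in\mathcal{A}_{t'}$ with $3\leq t<t'\leq k$, the orthogonality of $\mathcal{M}_t$ and $\mathcal{M}_{t'}$ supplies a unique $(i,j)$ with $(\mathcal{M}_t)_{ij}=r$ and $(\mathcal{M}_{t'})_{ij}=s$, and $(*)$ places both lines through $l_{1i}\cap l_{2j}\in\mathcal{X}$.

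Next I would verify condition (3): through every $p=l_{1a}\cap l_{2b}\in\mathcal{X}$ there is a unique line of each class. Existence is immediate, since $l_{1a}$, $l_{2b}$, and $l_{t,(\mathcal{M}_t)_{ab}}$ for $t\geq 3$ all pass through $p$ by $(*)$. Uniqueness in $\mathcal{A}_1$ and $\mathcal{A}_2$ is forced by the hypothesis that the $d^2$ intersection points are distinct, since otherwise some $l_{1a'}$ with $a'\neq a$ through $p$ would give $l_{1a'}\cap l_{2b}=l_{1a}\cap l_{2b}$. For $\mathcal{A}_t$ with $t\geq 3$, I first observe that any $l_{tr}$ is distinct from $l_{1a}$ because $l_{tr}$ meets $\mathcal{X}$ in a transversal (one point per row of $\mathcal{M}_t$) while $l_{1a}$ meets it in a full row; hence $l_{tr}\cap l_{1a}$ is a single point, which by the calculation in condition (2) is $l_{1a}\cap l_{2b'}$ with $(\mathcal{M}_t)_{ab'}=r$. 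Requiring this to equal $p$ yields $b'=b$ and $r=(\mathcal{M}_t)_{ab}$. Condition (1) then falls out by comparing the incidence patterns of the lines with $\mathcal{X}$: lines of $\mathcal{A}_t$ ($t\geq 3$) meet $\mathcal{X}$ in transversals, lines of $\mathcal{A}_1$ or $\mathcal{A}_2$ meet it in full rows or columns, distinct lines of a single $\mathcal{A}_t$ correspond to disjoint transversals (different symbols of the Latin square), and two lines from different higher classes share exactly one point of $\mathcal{X}$ by orthogonality, not the $d$ points that would be required for coincidence.

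The main obstacle in this plan is the uniqueness step of condition (3): one must rule out the possibility that some $l_{tr}$ accidentally passes through a point of $\mathcal{X}$ outside its prescribed transversal. This is handled by the classical fact that two distinct lines of $\mathbb{P}^2$ meet in a single point, combined with the transversal-versus-row comparison above that pins $l_{tr}$ apart from every $l_{1i}$ and $l_{2j}$. Once this is in place, the remainder of the verification is a clean bookkeeping exercise using only the Latin square axiom and orthogonality.
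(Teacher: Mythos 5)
Your argument is correct, but there is nothing in the paper to compare it against: the paper states this proposition (and its converse partner) with no proof at all, attributing both to Chapter 2 of Stipins's thesis. So your write-up is a genuine addition rather than a variant of an existing argument. The verification you give is the natural one, and the one real subtlety is exactly the one you isolate: each condition of the net definition reduces to a counting statement once you know that $l_{tr}$ ($t\geq 3$) meets $\mathcal{X}$ in precisely its transversal and in particular coincides with no $l_{1i}$, $l_{2j}$, or line of another class. Your justification of this is sound but worth tightening when written out: first show $l_{1a}\cap\mathcal{X}$ is exactly row $a$ (a point $p_{i'j'}$ with $i'\neq a$ on $l_{1a}$ would force $l_{1a}\cap l_{2j'}$ to have two preimages among the $d^2$ distinct intersection points), then note that for $d\geq 2$ the transversal of $l_{tr}$ contains a point outside row $a$, so $l_{tr}\neq l_{1a}$; only after that can you conclude $l_{tr}$ meets each $l_{1i}$ once and hence meets $\mathcal{X}$ in at most $d$ points, which the transversal already exhausts. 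The case $d=1$ degenerates (transversal, row, and column all collapse to one point), but that case is outside the scope of the nets considered here. With that ordering made explicit, all three axioms follow as you describe from the Latin square property and orthogonality.
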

        
    Therefore an orthogonal set of Latin squares can be thought of as defining an abstract $(k,d)$-net. Our problem is to determine whether an abstract $(k,d)$-net can be embedded in $\mathbb{C}P^2$. For some values of \textit{k} and \textit{d}, even an abstract $(k,d)$-net is impossible to find. For example we know by the famous Euler's conjecture on Latin squares (stated by Euler in 1779, proved by Gaston Tarry in 1900 \cite{tarry}) that there are no $6\times 6$ orthogonal pairs of Latin squares. Because of this there are no $(4,6)$-nets. However the $(k,d)$-nets we interested in (and the nets which have restrictions above) are not abstract nets, they are nets which can be seen in $\mathbb{C}P^2$. So that, although the Latin squares determine the existence of the abstract nets, we should use other methods to find whether or not a net can be realized in $\mathbb{C}P^2$. Let us finish this section by the next proposition which is straightforward to prove.

        \begin{prop} $\mathcal{X}=\{p_1,\ldots,p_{d^2}\}$. The elements of $\mathcal{X}$ are the points of a $(k,d)$-net if and only if $\mathcal{X}$ can be partitioned in $k$ different ways into $d$ sets each of which includes $d$ collinear points. \end{prop}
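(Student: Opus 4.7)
The proposition is an unpacking of the definition, so the plan is to verify the equivalence by matching up the three defining properties of a $(k,d)$-net with the partition data, one direction at a time.

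For the forward implication, I would start from a $(k,d)$-net $(\mathcal{A}_1, \ldots, \mathcal{A}_k, \mathcal{X})$ and, for each $i$, partition $\mathcal{X}$ by grouping points according to which line of $\mathcal{A}_i$ contains them. Property~3 of the definition guarantees this is a partition and, since $|\mathcal{A}_i|=d$, that it has exactly $d$ parts. To see that a given part $\ell \cap \mathcal{X}$ contains $d$ collinear points, I pick any $j \neq i$: each of the $d$ lines $m \in \mathcal{A}_j$ meets $\ell$ at a point of $\mathcal{X}$ by property~2, and these $d$ intersection points are distinct, because two coincident ones would be a point of $\mathcal{X}$ lying on two distinct lines of $\mathcal{A}_j$, contradicting property~3. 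Thus $|\ell \cap \mathcal{X}| \geq d$; summing over the $d$ lines of $\mathcal{A}_i$ and comparing with $|\mathcal{X}|=d^2$ forces equality. The $k$ resulting partitions are pairwise distinct because the $\mathcal{A}_i$ are pairwise disjoint and any $d\geq 2$ collinear points determine a unique line.

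For the backward implication, I take $k$ distinct partitions $P_1,\ldots,P_k$ of $\mathcal{X}$ of the prescribed form and define $\mathcal{A}_i$ to be the set of $d$ lines supporting the parts of $P_i$. Property~3 is immediate from $P_i$ being a partition. For property~2, I fix $\ell \in \mathcal{A}_i$ and $m \in \mathcal{A}_j$ with $i\neq j$ and $\ell\neq m$, and argue: the $d$ points of $\ell\cap \mathcal{X}$ are distributed among the parts of $P_j$, and no two of them can lie on a common line of $\mathcal{A}_j$, for then that line would share two points with $\ell$ and coincide with it. By pigeonhole each line of $\mathcal{A}_j$, in particular $m$, carries exactly one point of $\ell\cap\mathcal{X}$, and that point is precisely $\ell \cap m \in \mathcal{X}$.

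The step I expect to require the most care is property~1, the disjointness of the $\mathcal{A}_i$. Strictly speaking, two distinct set partitions can share a single part, so the bare statement ``$k$ different partitions'' does not at first glance force the associated line families to be disjoint. I would therefore read ``$k$ different ways'' as meaning that the $k$ unordered collections of $d$ lines produced by the partitions are pairwise distinct as sets of lines, which is the natural geometric reading and is exactly property~1. Under this interpretation property~1 holds by hypothesis, and together with the verifications of properties~2 and~3 above the argument is complete.
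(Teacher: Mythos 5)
The paper offers no proof of this proposition at all---it is introduced with the remark that it is ``straightforward to prove''---so there is nothing of the authors' to compare your argument against, and it must stand on its own. Your forward direction does stand: partitioning $\mathcal{X}$ by the lines of $\mathcal{A}_i$, using property~3 for disjointness of the parts, the count $d\cdot d=|\mathcal{X}|$ to pin down the part sizes, and the disjointness of the $\mathcal{A}_i$ (plus the fact that $d\ge 2$ collinear points determine their line) for distinctness of the $k$ partitions is exactly the right bookkeeping.

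The backward direction has a genuine gap, and it is not the one you flagged. The sentence ``Property~3 is immediate from $P_i$ being a partition'' is not correct: the partition only guarantees that each $p\in\mathcal{X}$ lies on \emph{at least} one line of $\mathcal{A}_i$, namely the line supporting its own part; it does not rule out that the line supporting some \emph{other} part of $P_i$ also passes through $p$, which would violate uniqueness. Equivalently, you need each supporting line $\ell$ to satisfy $|\ell\cap\mathcal{X}|=d$ exactly, and your property~2 argument silently assumes this as well (``the $d$ points of $\ell\cap\mathcal{X}$''). This is not automatic from the hypothesis. For $d=2$ it genuinely fails: any four points with three of them collinear admit all three partitions into two pairs of (trivially) collinear points, yet they are not the point set of a $(3,2)$-net, since in a net no three of the $p_{ij}$ can be collinear. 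Even for $d=3$, $k=2$ one can place nine points with four on a common line and still exhibit two admissible partitions. So the backward implication requires either an added general-position hypothesis (no $d+1$ points of $\mathcal{X}$ collinear, which then forces the supporting lines to be distinct and lets your pigeonhole argument for property~2 go through) or an actual argument that, for the relevant $(k,d)$, a line carrying more than $d$ points of $\mathcal{X}$ is incompatible with the existence of the remaining partitions; as written that step is missing. Your reinterpretation of ``$k$ different ways'' for property~1 has the same flavor of trouble: ``pairwise distinct as sets of lines'' is still strictly weaker than the pairwise \emph{disjointness} that the definition of a net demands.
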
     

\subsection{Tropical Lines}

	There are many ways to describe tropical curves, and in particular tropical lines \cite{gath} \cite{spe}. We choose the pathway using amoebas of curves.
       	
	\begin{defn}\cite{mik} Let $V \subset  (\mathbb{C}^*)^n$ be an algebraic variety where $\mathbb{C}^* = \mathbb{C} - \{0\}$
                        \begin{eqnarray}
                            Log:(\mathbb{C}^*)^n &\rightarrow& \mathbb{R}^n \nonumber \\
                            (z_1,\ldots,z_n)&\mapsto& (log|z_1|,\ldots,log|z_n|)\nonumber
                        \end{eqnarray}
                       Then the set $Log(V)$ is called the \textbf{amoeba} of $V$.
	\end{defn}                      
				
	\begin{prop}
		If $V=\{z_1+z_2=-1\}$ then the graph of $Log(V)$ is as in \textit{Figure 4} \cite{iten}.
		
			\begin{figure}[h]\begin{center} \resizebox{8cm}{5.7cm}{\includegraphics{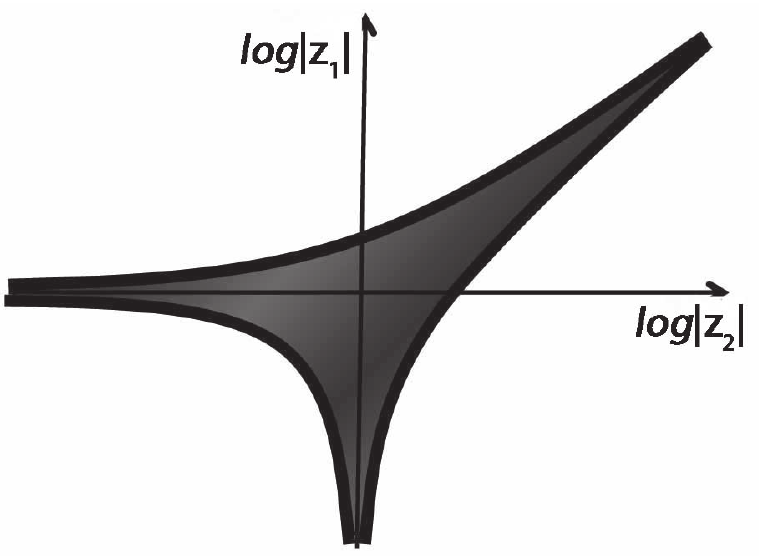}} \caption{The Amoeba of $V$} \label{fig:amip} \end{center}\end{figure}

	\end{prop}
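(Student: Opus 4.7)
The plan is to give an explicit description of $\mathrm{Log}(V)$ as a subset of $\mathbb{R}^2$ and then read off the picture. Write $a=\log|z_1|$, $b=\log|z_2|$, and set $r_1=e^a$, $r_2=e^b$. Then $(a,b)\in\mathrm{Log}(V)$ if and only if there exist $z_1,z_2\in\mathbb{C}^{*}$ with $|z_1|=r_1$, $|z_2|=r_2$, and $z_1+z_2=-1$.

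The first step is to convert this condition into an elementary statement about the radii $r_1,r_2$. The triangle inequality gives the three necessary conditions
\[
r_1+r_2\geq 1,\qquad r_1\leq 1+r_2,\qquad r_2\leq 1+r_1,
\]
i.e.\ $|r_1-r_2|\leq 1\leq r_1+r_2$. For the converse I would argue geometrically: fix $z_2$ on the circle of radius $r_2$ and look at $z_1:=-1-z_2$. As $z_2$ moves once around its circle, $|z_1|$ varies continuously between $|1-r_2|$ and $1+r_2$, so by the intermediate value theorem any value $r_1$ in the interval $[\,|1-r_2|,\,1+r_2\,]$ is attained. Hence the amoeba is exactly
\[
\mathrm{Log}(V)\;=\;\bigl\{(a,b)\in\mathbb{R}^2 \;:\; |e^a-e^b|\leq 1\leq e^a+e^b\bigr\}.
\]

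The second step is to analyse the boundary curves $e^a+e^b=1$, $e^a-e^b=1$, and $e^b-e^a=1$ and identify the three asymptotic directions. When $a\to-\infty$, the first boundary forces $b\to 0^{-}$ while the third forces $b\to 0^{+}$; together they pinch the region into a horizontal tentacle along the ray $b=0$, $a\to-\infty$. Symmetrically, as $b\to-\infty$ we get a vertical tentacle along $a=0$, $b\to-\infty$. Finally, as $a,b\to+\infty$ the two inequalities $|e^a-e^b|\leq 1$ force $|a-b|\to 0$, producing a diagonal tentacle along $a=b$, $a,b\to+\infty$. These three rays emanate from the neighbourhood of the origin (the three boundary curves all meet near $(0,0)$) and thicken into the region described, which is exactly the shape drawn in Figure 4.

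The main obstacle I anticipate is the reverse implication in the first step: one must be sure that the three inequalities on the radii are not only necessary but also sufficient to realise $z_1+z_2=-1$. The continuity/IVT argument above handles this cleanly, and once it is in place the rest of the proof is a matter of sketching the region cut out by the explicit inequalities and verifying that its three tentacles point in the directions $(-1,0)$, $(0,-1)$ and $(1,1)$, matching the standard tropical line with vertex at the origin.
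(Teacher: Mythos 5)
Your proposal is correct and follows essentially the same route as the paper: use the triangle inequality to constrain the radii $r_1=|z_1|$, $r_2=|z_2|$, then identify and analyse the three boundary curves $e^x+e^y=1$, $e^x-e^y=1$, $e^y-e^x=1$ and their asymptotic tentacles. The one substantive difference is in your favor: your intermediate-value-theorem argument establishes that the radius inequalities are also \emph{sufficient}, i.e.\ that the whole region between the boundary curves is filled in, a point the paper's proof leaves implicit by only discussing the boundaries.
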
 
	
	\begin{proof}
	$z_1=r_1e^{i\theta_1}$ and $z_2=r_2e^{i\theta_2}$, $r_1=|z_1|$ and $r_2=|z_2|$.
	A point $(x,y)$ with $x=log(r_1)$ and $y=log(r_2)$ belongs to the amoeba if and only if there exist $\theta_1$, $\theta_2$ such that $r_1e^{i\theta_1}+r_2e^{i\theta_2}=-1$.
	By the triangle inequality, the boundaries of the amoeba correspond to $r_2-r_1=1$, $r_1-r_2=1$ and $r_1+r_2=1$.
	We check the boundaries one by one: \\
	$\underline{r_2-r_1=1}\Rightarrow$ $e^y-e^x=1$: We solve this equality as $y=log(1+e^x)$. We see that \textit{y} increases with \textit{x}, and $\mathop{\lim} \limits_{x \to \infty} (y) \to \infty$. The graph of $y=log(1+e^x)$ is asymptotic to $y=x$.  Also $\mathop{\lim} \limits_{x \to -\infty} (y)= 0$ which explains the boundary in the second quadrant.\\
	$\underline{r_1-r_2=1}\Rightarrow$ $e^x-e^y=1$: We may obtain this graph by changing \textit{x} and \textit{y} in the above case which means that the graph of $e^x-e^y=1$ is the symmetric to the graph of $e^y-e^x=1$ with respect to $x=y$.\\
	$\underline{r_1+r_2=1} \Rightarrow$ $e^x+e^y=1$: $y=log(1-e^x)$. \textit{y} decreases if \textit{x} increases. This function is defined when $e^x<1$, that is $x<0$. If $x<0$ then $y<0$. $\mathop{\lim} \limits_{x \to 0^{-}} (y)\to -\infty$ and $\mathop{\lim} \limits_{x \to -\infty} (y)\to 0$. This explains the lower left boundary. 
	\end{proof}
		
	\begin{prop}
	If $V=\{az_1+bz_2+c=0\}$ then the graph of the amoeba of $V$ is the translation of \textit{Figure 4} by $log{\frac{c}{a}}$ and $log{\frac{c}{b}}$ in the directions of $log|z_1|$ and $log|z_2|$ respectively.
	\end{prop}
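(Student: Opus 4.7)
The plan is to reduce the general case $V = \{az_1 + bz_2 + c = 0\}$ to the curve $V_0 = \{w_1 + w_2 + 1 = 0\}$ of Proposition 2.3 via a multiplicative change of coordinates on $(\mathbb{C}^*)^2$. Assuming $abc \ne 0$, I would first divide the defining equation by $c$, obtaining $\frac{a}{c}z_1 + \frac{b}{c}z_2 + 1 = 0$, and then substitute $w_1 = \frac{a}{c}z_1$, $w_2 = \frac{b}{c}z_2$. In the $w$-coordinates $V$ is cut out by $w_1 + w_2 + 1 = 0$, which is exactly $V_0$, so the amoeba of $V$ read in the $w$-chart is precisely the set drawn in Figure 4.

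The second step is to track how the Log map transforms under this rescaling. For any $\lambda \in \mathbb{C}^*$ one has $\log|\lambda z| = \log|\lambda| + \log|z|$, so multiplication in a multiplicative coordinate by $\lambda$ is translation in the additive coordinate by $\log|\lambda|$. Applying this coordinatewise with $\lambda_1 = a/c$ and $\lambda_2 = b/c$ gives
\[
\bigl(\log|z_1|,\,\log|z_2|\bigr) \;=\; \bigl(\log|w_1|,\,\log|w_2|\bigr) + \bigl(\log|c/a|,\,\log|c/b|\bigr).
\]
Hence $\mathrm{Log}(V)$, viewed in the $(\log|z_1|,\log|z_2|)$ plane, is obtained from $\mathrm{Log}(V_0)$ (that is, Figure 4) by translating by the vector $(\log|c/a|,\log|c/b|)$, which is the content of the proposition.

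There is no real obstacle here: the result is essentially a corollary of Proposition 2.3 combined with the equivariance of the Log map under the coordinatewise torus action $(\mathbb{C}^*)^2 \to \mathbb{R}^2$. The only points where I would be a little careful are (i) interpreting the logarithms in the statement as $\log|c/a|$ and $\log|c/b|$, which is what the rescaling naturally produces, and (ii) recording the standing assumption $abc \ne 0$, so that the substitution $w_i = (\text{coeff})\,z_i$ is an automorphism of $(\mathbb{C}^*)^2$ and division by $c$ is legal; the degenerate cases produce amoebas of a different combinatorial type and must be treated separately.
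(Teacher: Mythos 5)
Your proof is correct and is essentially the paper's own argument: both rest on Proposition 2.3 together with the identity $\log|\lambda z| = \log|\lambda| + \log|z|$, the only difference being that you package the rescaling as a single change of coordinates $w_i = (\text{coeff})z_i$ while the paper tallies the translation effects of $a$, $b$, $c$ separately to arrive at the same shift $(\log|c|-\log|a|,\ \log|c|-\log|b|)$. Your added care about the standing assumption $abc \neq 0$ and about reading the logarithms as $\log|c/a|$, $\log|c/b|$ is a minor but genuine improvement over the paper's phrasing.
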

	
	\begin{proof}
	Since $log|az_1|=log|a|+log|z_1|$, the effect of $a$ is translation of the figure in the direction of the negative $log|z_1|$ axis by $log|a|$. Similarly the effect of $b$ is translation of the figure in the direction of the negative $log|z_2|$ axis by $log|b|$. The effect of $c$ is translation of the figure in the direction of the positive $log|z_1|$ axis and the positive $log|z_2|$ axis by $log|c|$. The overall effect is translation by $log|c|-log|a|$ and  $log|c|-log|b|$ in the directions of $log|z_1|$ and $log|z_2|$ axes respectively.
	\end{proof}
				
	\begin{defn}				
		Let $V_t\subset \mathbb{(C^*)}^n$ be a one parameter family of subvarieties of $\mathbb{(C^*)}^n$. The set $\mathop{\lim} \limits_{t \to \infty} (log_t(V_t))$ is called the \textbf{tropicalization} of $V_t$. If $V_t$ is a family of lines in $(\mathbb{C^*})^2$ the graph of $\mathop{\lim} \limits_{t \to \infty} (log_t(V_t))$ is called a \textbf{tropical line}.
	\end{defn}
	
	\begin{prop}
		If $V_t=\{az_1+bz_2=c\}$ where $a,b,c\in \mathbb{C}$ then the tropical line $\mathop{\lim} \limits_{t \to \infty} (log_t(V_t))$ is in \textit{Figure 5}.
		
		\begin{figure}[h] \begin{center} \resizebox{6cm}{4.3cm}{\includegraphics{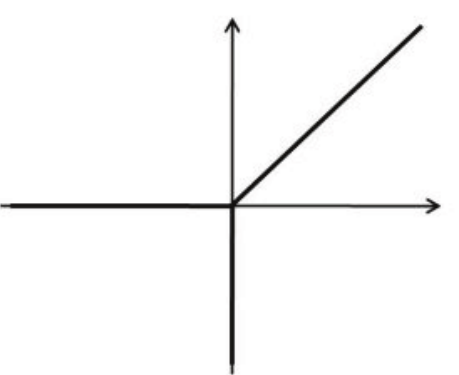}} \caption{A Tropical Line} \label{fig:tropicalline} \end{center}\end{figure}
		
	\end{prop}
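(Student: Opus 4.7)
The plan is to reduce the general case to the standard amoeba already analyzed in Proposition 2.6 and then track what survives the rescaling $\log_t = \log/\log(t)$ as $t\to\infty$. By Proposition 2.7, the amoeba $A$ of $V=\{az_1+bz_2=c\}$ is a translate $A_0+v$ of the standard amoeba $A_0$ of $\{z_1+z_2=-1\}$, where $v=(\log|c/a|,\log|c/b|)$ is a fixed vector independent of $t$. Consequently $\log_t(V)=(A_0+v)/\log(t)$, and since $v/\log(t)\to 0$, the problem reduces to computing $\lim_{t\to\infty} A_0/\log(t)$.

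Next I would verify that this limit, interpreted in the Hausdorff sense on compact subsets of $\mathbb{R}^2$, equals the union of rays emanating from the origin along the asymptotic directions of $A_0$. The bounded portion of $A_0$ shrinks to the origin under division by $\log(t)$. Any nonzero limit point $p$ is produced by a sequence $q_n\in A_0$ with $q_n/\log(t_n)\to p$, which forces $q_n\to\infty$ along the direction $p/|p|$. Conversely, along each unbounded branch of $A_0$ one can pick points whose magnitudes, divided by $\log(t_n)$, realize any prescribed positive value, so every ray in an asymptotic direction is attained in full.

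It remains to read off these directions from the boundary computation of Proposition 2.6. The branch $r_2-r_1=1$ escapes along the asymptote $y=x$ at $+\infty$ and approaches $y=0$ as $x\to-\infty$; its reflection $r_1-r_2=1$ shares the asymptote $y=x$ and approaches $x=0$ as $y\to-\infty$; and the arc $r_1+r_2=1$ bounds the region in the third quadrant, joining the remaining ends. Hence $A_0$ has exactly three unbounded tentacles, in the directions $(1,1)$, $(-1,0)$, and $(0,-1)$. These three vectors balance to zero and produce precisely the trivalent tropical line of Figure 5.

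The main obstacle is the middle step: one must rule out the possibility that the limit contains extra interior material, or only fragments of the asymptotic rays rather than full rays. The explicit parametrizations $y=\log(1+e^x)$ and $x=\log(1+e^y)$ from Proposition 2.6 make this transparent, since along each tentacle $|q|$ is a continuous and unbounded function of the parameter, so for any target $r>0$ one can choose $t_n$ with $|q_n|/\log(t_n)\to r$, yielding the required surjectivity onto $(0,\infty)$ and closing the argument.
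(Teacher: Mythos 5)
Your proposal is correct and follows essentially the same route as the paper: both reduce to the standard amoeba of Proposition 2.6, observe that the constant coefficients contribute only $O(1/\log t)$ after the rescaling and hence vanish in the limit, and identify the limit with the three rays in the directions $(1,1)$, $(-1,0)$, $(0,-1)$ coming from the amoeba's three tentacles. The only difference is presentational: you package the final step as computing the asymptotic cone of the fixed amoeba $A_0$ under the rescaling $A_0/\log t$, whereas the paper writes out the rescaled boundary curves $(\log t)\,y=\log(1\pm e^{(\log t)x})$ explicitly and checks that their intercepts, concavity, and asymptotes converge to the tripod.
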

	
	\newpage
	
	\begin{proof}
	$log_t|az_1|=log_t|a|+log_t|z_1|=\frac{log|a|}{logt}+\frac{log|z_1|}{logt}$.
	If ${t \to \infty}$ then $\frac{log|a|}{logt} \to 0$ so the number $a$ has no impact. Similarly the numbers $b$ and $c$ have no impact. The effect of $logt$ on $log|z_1|$ and $log|z_2|$ is shrinking the figure. Let us consider what happens to the boundary curves. \\ 
	The upper boundary curve becomes $(logt)y=log(1+e^{(logt)x})$. This function is increasing and above the proposed limit curve. We want to find the \textit{y-intercept} of the graph. If $x=0$, that is, $log{r_1}=0$ then $r_1=1$. Since $z_1+z_2=-1$, $z_2$ may be at most 2, therefore $y=log_t{2}$. Therefore the $y-intercept$ approaches 0 as $t \to \infty$ and, $y''=logt\frac{e^{(logt)x}}{((1+e^{(logt)x})^2}$ which is always greater than 0 where \textit{t} is big enough. Therefore the graph of the upper boundary curve is concave up with the $y-intercept$ shrinking to 0. Notice that $y=\frac{log(1+e^{(logt)x})}{logt}$ is asymptotic to $x$ as $x \to \infty$ and to $y=0$ as $x \to -\infty$.\\
	The boundary curve on the right is similar. The \textit{x-intercept} is $(0,log_t{2})$ and $y''<0$.\\
	The lower boundary curve would be $(logt)y=log(1-e^{(logt)x})$. This function is decreasing and below the proposed limit curve. We have $x,y<0$, therefore $r_1,r_2<1$. The equation is $z_1+z_2=-1$, the point on $x=y$ on the boundary is $(log\frac{1}{2},log\frac{1}{2})=(-log_t2,-log_t2)$. This point aproaches $(0,0)$ as $t \to \infty$. We have $y''=-logt\frac{e^{(logt)x}}{((1+e^{(logt)x})^2}$, which is smaller than 0 for big values of \textit{t}. Therefore the graph is concave down. Notice that $y=\frac{log(1-e^{(logt)x})}{logt}$ is asymptotic to $y=0$ as $x\to -\infty$, and to $-\infty$ as $x=0$.
	\end{proof}
	
	If the coefficients are just numbers then the tropical line always has a center at origin. In order to get nontrivial tropical lines, instead of looking at just one variety, we look at families of varieties. Hence we change the coefficients to polynomials in \textit{t}. In the next proposition we will see the effect of these polynomials to the tropical line.\\
	
	\begin{prop}
	Let $V_t=\{ f(t)z_1+g(t)z_2=h(t)\}$ be a family of lines in $(\mathbb{C^*})^2$ where f(t), g(t), h(t) are polynomials and $n_f,\ n_g$ and $n_h$ are the degrees of $f,\ g$ and $h$ respectively. Then the graph of the tropical line $\mathop{\lim} \limits_{t \to \infty} (log_t(V_t))$ is the translation of \textit{Figure 5} by $n_h-n_f$ and $n_h-n_g$ in the directions of $log|z_1|$ and $log|z_2|$ axes respectively.
	\end{prop}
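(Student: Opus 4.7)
The plan is to combine Proposition 2.7 (which determines the translation of an amoeba of a line by the ratios of its coefficients) with the $\log_t$ rescaling argument used in Proposition 2.9, and to evaluate the limiting translation in terms of the degrees of $f,g,h$.

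First I would rewrite the equation of $V_t$ in the form covered by Proposition 2.7, with $a=f(t)$, $b=g(t)$, and $c=-h(t)$. For each fixed $t$ (with $f(t),g(t),h(t)\neq 0$), that proposition says the usual amoeba $\operatorname{Log}(V_t)$ is the translation of the base amoeba of Figure 4 by the vector
\[
\bigl(\log|h(t)/f(t)|,\ \log|h(t)/g(t)|\bigr)
=\bigl(\log|h(t)|-\log|f(t)|,\ \log|h(t)|-\log|g(t)|\bigr).
\]

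Next I would pass from $\operatorname{Log}$ to $\operatorname{Log}_t$, which simply divides every coordinate by $\log t$. Applying this to the identity above, the translation vector becomes
\[
\bigl(\log_t|h(t)|-\log_t|f(t)|,\ \log_t|h(t)|-\log_t|g(t)|\bigr),
\]
and the base shape (Figure 4) gets rescaled exactly as in the proof of Proposition 2.9, so that its limit as $t\to\infty$ is the standard tropical line of Figure 5 centered at the origin. The only thing that still depends on $t$ in a nontrivial way is the translation vector, so it remains to evaluate its limit.

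For this I would use the elementary fact that if $p(t)=a_n t^n+\dots+a_0$ is a polynomial of degree $n$ with $a_n\neq 0$, then $|p(t)|\sim|a_n|\,t^n$ as $t\to\infty$, so
\[
\log_t|p(t)|=\frac{\log|a_n|+n\log t+o(1)}{\log t}\longrightarrow n.
\]
Applied to $f,g,h$ this gives $\log_t|f(t)|\to n_f$, $\log_t|g(t)|\to n_g$, $\log_t|h(t)|\to n_h$, so the translation vector tends to $(n_h-n_f,\ n_h-n_g)$. Combining with the previous step, $\lim_{t\to\infty}\log_t(V_t)$ is the tropical line of Figure 5 translated by $(n_h-n_f,\ n_h-n_g)$, as claimed.

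The step requiring the most care is verifying that the translation and the rescaling commute in the limit: one must check that translating \emph{first} (by an amount depending on $t$) and then rescaling by $1/\log t$ yields the same limit as first rescaling the base amoeba and then translating by the limiting vector. This is a uniform-convergence statement on compact subsets of $\mathbb{R}^2$; once it is established, the rest of the argument is routine asymptotic analysis of $\log_t$ applied to a polynomial, exactly as in Proposition 2.9.
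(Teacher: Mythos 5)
Your proposal is correct and takes essentially the same route as the paper: both reduce to the translation formula of Proposition 2.7 with $a=f(t)$, $b=g(t)$, $c=-h(t)$, and then use the fact that $\log_t|p(t)|\to\deg p$ as $t\to\infty$ so that the translation vector tends to $(n_h-n_f,\ n_h-n_g)$ while the rescaled base shape converges to Figure 5. You make explicit the interchange of the $t$-dependent translation with the $1/\log t$ rescaling, which the paper leaves implicit, but the argument is the same.
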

	 
	\begin{proof}
	The degree of \textit{f(t)} is $n_f$. The other powers of \textit{f(t)} does not have an effect on $\mathop{\lim} \limits_{t \to \infty} (log_t(f(t)))$, hence the graph is only effected by $n_f$. Similarly the graph is effected by $n_g$ and $n_h$. The rest  is similar to the proof of \textit{Proposition 2.7}.
	\end{proof}
	
	\begin{prop}
	Let $V_t$ and $W_t$ be two families of lines in $(\mathbb{C^*})^2$. \\ Then $\mathop{\lim} \limits_{t \to \infty} (log_t(V_t\cup W_t))$ = $\mathop{\lim} \limits_{t \to \infty} (log_t(V_t)) \cup \mathop{\lim} \limits_{t \to \infty} (log_t(W_t))$
	\end{prop}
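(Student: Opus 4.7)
The plan is to prove this by first noting that image-under-a-function commutes with union pointwise in $t$, and then arguing that the $t\to\infty$ limit commutes with a \emph{finite} union of closed subsets of $\mathbb{R}^2$.

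The first step is completely formal: for each fixed $t$, the map $\log_t\colon (\mathbb{C}^*)^2 \to \mathbb{R}^2$ is simply a function applied to points, so
\[
\log_t(V_t \cup W_t) \;=\; \log_t(V_t) \,\cup\, \log_t(W_t),
\]
since the image of a union under any function is the union of the images. So the question reduces to whether, for closed subsets $A_t, B_t \subset \mathbb{R}^2$, one has $\lim_{t\to\infty}(A_t \cup B_t) = \bigl(\lim_{t\to\infty} A_t\bigr) \cup \bigl(\lim_{t\to\infty} B_t\bigr)$.

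The second step is to verify this limit-commutes-with-union property. Interpreting the limit in the Hausdorff/Kuratowski sense on closed subsets of $\mathbb{R}^2$ (which is the natural interpretation used in the earlier propositions describing tropical lines as limits of amoebas), one direction is immediate: any point that is a limit of points in $A_t \cup B_t$ is, after passing to a subsequence, either a limit of points in $A_t$ or a limit of points in $B_t$, hence lies in $\lim A_t$ or $\lim B_t$. Conversely, any point in $\lim A_t$ (respectively $\lim B_t$) is trivially a limit of points in $A_t \cup B_t$. This establishes set equality and finishes the proof.

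The only place where one must be a bit careful is ensuring the limit-of-union argument works with the particular notion of limit being used, and that one really has a \emph{finite} union (the identity can fail for infinite unions, since limits then bring in accumulation points not present in any single component). Since only two families $V_t, W_t$ appear in the statement, this is not an issue, and the proposition reduces to the two elementary observations above.
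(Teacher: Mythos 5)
Your proposal is correct and follows essentially the same route as the paper: the inclusion $\lim(\log_t V_t)\cup\lim(\log_t W_t)\subset\lim(\log_t(V_t\cup W_t))$ is immediate, and the reverse inclusion is obtained by the same pigeonhole/subsequence argument (a convergent sequence of points of $V_t\cup W_t$ has infinitely many terms in one of the two families). The only difference is that you make explicit two points the paper leaves implicit --- that $\log_t$ commutes with unions pointwise in $t$, and that the identity would fail for infinite unions --- which is a harmless refinement rather than a different proof.
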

	
	\begin{proof}
	It is clear that $\mathop{\lim} \limits_{t \to \infty} (log_t(V_t)) \cup \mathop{\lim} \limits_{t \to \infty} (log_t(W_t)) \subset \mathop{\lim} \limits_{t \to \infty} (log_t(V_t\cup W_t))$ \\
	Conversely, say $P\in \mathop{\lim} \limits_{t \to \infty} (log_t(V_t\cup W_t))$. Then there exists a sequence $\{a_k\} \subset V_{t_k}\cup W_{t_k}$ such that $\mathop{\lim} \limits a_k = P$. But $\{a_k\}$ contains either infinitely many points from $V_t$ or from $W_t$. Thus $P\in \mathop{\lim} \limits_{t \to \infty} (log_t(V_t)) \cup \mathop{\lim} \limits_{t \to \infty} (log_t(W_t))$
	\end{proof}
	
      Tropicalization gives the opportunity to see some features of a given complex plane curve using a simpler picture in $\mathbb{R}^2$. For example, Mikhalkin \cite{mik2} found a simpler way of counting curves in $\mathbb{P}^2$ satisfying certain conditions by using tropical geometry. Recently many classical concepts in algebraic geometry have been translated into tropical geometry \cite{ardila} \cite{bert} \cite{gath} \cite{gath2} \cite{gath3} \cite{iten} \cite{shu}.

\newpage

\subsection{Tropical Nets}
	
	\subsubsection{Tropicalization of a $(3,2)$-net:}
			
	As an example, we find a tropicalization of the following (3,2)-net. In this case, it is possible to find a tropicalization in which all line families have distinct tropical limits. 
	
	\begin{center} $\ell_{11}=\{x=0\}\ \ \ell_{12}=\{y-z=0\} \ \ \ell_{21}=\{z=0\} $ $\ell_{22}=\{x-y=0\} \ \ \ell_{31}=\{y=0\} \ \ \ell_{32}=\{x-z=0\}$ 
	\end{center}
			
	We will denote each line in $\mathbb{P}^2$ by its dual coordinates in $(\mathbb{P}^2)^*$. So  $ax+by+cz=0$ will be denoted by $[a : b : c]$ (or its transpose). We form the following matrix by writing the dual coordinates of  $\ell_{11}, \ell_{12}, \ell_{21}, \ell_{22}, \ell_{31}$ and $\ell_{32}$ in columns.
	
			$$
				\left.
					\begin{matrix}
						& \ell_{11} & \ell_{12} & \ell_{21} & \ell_{22} & \ell_{31} & \ell_{32} \\		
						\left[
						\begin{matrix} \\ \\ \\ \end{matrix} \right. &
						\begin{matrix} 1 \\ 0 \\ 0 \end{matrix}&
						\begin{matrix} 0 \\ 1 \\ -1 \end{matrix} &
						\begin{matrix} 0 \\ 0 \\ 1 \end{matrix}&
						\begin{matrix} 1 \\ -1 \\ 0 \end{matrix} &
						\begin{matrix} 0 \\ 1 \\ 0 \end{matrix}&
						\begin{matrix} 1 \\ 0 \\ -1 \end{matrix} &
						\left.
						\begin{matrix} \\ \\ \\ \end{matrix} \right] &		
					\end{matrix}
				\right.
			$$
			
			We will apply a linear transformation with coefficients in $\mathbb{C}[t]$ to this configuration. The logarithmic limit of this family will give the tropicalization of this net.

$$\left[
            \begin{array}{ccc}
              t & t^2 & t^4 \\
              t^3 & t & t^2 \\
              t^2 & t^5 & 1 \\
            \end{array}
        \right]
        \left[
          \begin{array}{cccccc}
            1 & 0 & 0 & 1 & 0 & 1 \\
            0 & 1 & 0 & -1 & 1 & 0 \\
            0 & -1 & 1 & 0 & 0 & -1 \\
          \end{array}
        \right]
        $$
        
        Looking at the $z\neq 0$ chart, the lines transform to the following lines after this tropicalization:
        
        \begin{equation*} 
        \begin{split} 
        \ell_{11}&: (t)x+(t^3)y+(t^2)=0 \\
        \ell_{12}&: (t^2-t^4)x+(t-t^2)y+(t^5-1)=0\\
        \ell_{21}&: (t^4)x+(t^2)y+1=0\\
        \ell_{22}&: (t-t^2)x+(t^3-t)y+(t^2-t^5)=0\\
        \ell_{31}&: (t^2)x+(t)y+(t^5)=0\\
        \ell_{32}&: (t-t^4)x+(t^3-t^2)y+(t^2-1)=0
        \end{split}
        \end{equation*}
        
        Now we want to determine the centers of the resulting tropical lines. For simplicity we write the highest powers of $t$ in the coefficients of $x,y,z$ in a matrix.
        
			$$
				\left.
					\begin{matrix}
						& \L_{11} & \L_{12} & \L_{21} & \L_{22} & \L_{31} & \L_{32} \\		
						\left[
						\begin{matrix} \\ \\ \\ \end{matrix} \right. &
						\begin{matrix} 1 \\ 3 \\ 2 \end{matrix}&
						\begin{matrix} 4 \\ 2 \\ 5 \end{matrix} &
						\begin{matrix} 4 \\ 2 \\ 0 \end{matrix}&
						\begin{matrix} 2 \\ 3 \\ 5 \end{matrix} &
						\begin{matrix} 2 \\ 1 \\ 5 \end{matrix}&
						\begin{matrix} 4 \\ 3 \\ 2 \end{matrix} &
						\left.
						\begin{matrix} \\ \\ \\ \end{matrix} \right] &		
					\end{matrix}
				\right.
			$$
     
			We use \textit{Proposition 2.9} to find the centers of the lines. We subtract the first and second row from the third. The numbers in the first row(the highest power of $t$ as a coefficient of $x$) shift the center in the negative $x$-direction, the numbers in the second row shift the graph in the negative $y$-direction and the numbers in the third row shift the graph to the positive $x$-direction and the positive $y$-direction by the same amount.
			
			$$
				\left.
					\begin{matrix}
						& \L_{11} & \L_{12} & \L_{21} & \L_{22} & \L_{31} & \L_{32} \\		
						\left[
						\begin{matrix} \\ \\ \end{matrix} \right. &
						\begin{matrix} 1 \\ -1 \end{matrix}&
						\begin{matrix} 1 \\ 3 \end{matrix} &
						\begin{matrix} -4 \\ -2 \end{matrix}&
						\begin{matrix} 3 \\ 2 \end{matrix} &
						\begin{matrix} 3 \\ 4 \end{matrix}&
						\begin{matrix} -2 \\ -1 \end{matrix} &
						\left.
						\begin{matrix} \\ \\ \end{matrix} \right] &		
					\end{matrix}
				\right.
			$$ 
			
        After this procedure, the graph looks like the one in \textit{Figure 6}.
	\begin{figure}[h] \begin{center} \resizebox{8cm}{6cm}{\includegraphics{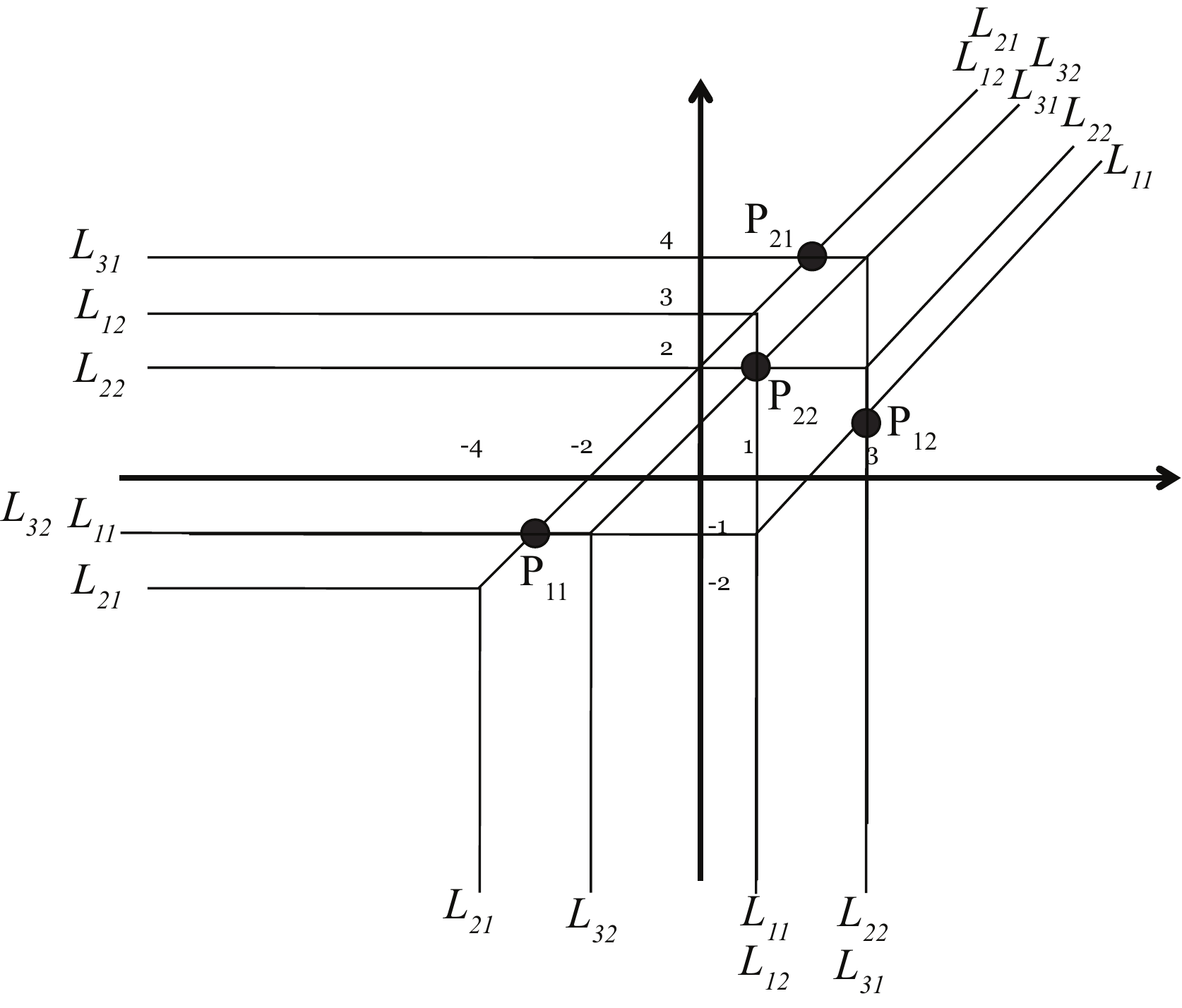}} \caption{A (3,2) Tropical Net} \label{fig:tropicalnet} \end{center}\end{figure}

\section{Nonexistence Of (4,4)-Nets}

\subsection{Orthogonal Latin Squares Of Order 4}
	In the next section we prove the nonexistence of (4,4)-nets. We need two orthogonal Latin squares (OLS) of order 4 to construct an abstract (4,4)-net.
	
	\begin{prop} The following is the unique pair of orthogonal Latin squares (OLS) of order 4 up to relabeling the numbers, and reordering rows and columns.
	
	\begin{center}
	$\left \{ \left[
            \begin{array}{cccc}
              1 & 2 & 3 & 4 \\
              2 & 1 & 4 & 3 \\
              3 & 4 & 1 & 2 \\
              4 & 3 & 2 & 1 \\
            \end{array}
          \right] \right.$
          $,
          \left. \left[
           \begin{array}{cccc}
              1 & 2 & 3 & 4 \\
              3 & 4 & 1 & 2 \\
              4 & 3 & 2 & 1\\
              2 & 1 & 4 & 3\\
            \end{array}
          \right] \right \} $
	\end{center}
	\end{prop}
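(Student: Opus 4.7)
The plan is to reduce the statement to a finite case check by normalizing the pair $(\mathcal{L},\mathcal{L}')$ via the allowed operations. First, by relabelling the symbols of $\mathcal{L}$ I may take its top row to be $(1,2,3,4)$; then, by a simultaneous permutation of rows $2,3,4$ of both squares (which does not disturb row $1$), I may sort the first column of $\mathcal{L}$ into $(1,2,3,4)^T$. Thus $\mathcal{L}$ becomes a \emph{reduced} $4 \times 4$ Latin square. Branching on the $(2,2)$-entry of $\mathcal{L}$, which must lie in $\{1,3,4\}$, and propagating the row and column Latin conditions forces the remaining entries; a short enumeration shows that exactly four reduced squares $R_1, R_2, R_3, R_4$ arise, with $R_1$ the Cayley table of the Klein four-group $\mathbb{Z}_2 \times \mathbb{Z}_2$ and each of $R_2, R_3, R_4$ the Cayley table of the cyclic group $\mathbb{Z}_4$ under some relabelling.

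Next, by relabelling the symbols of $\mathcal{L}'$ I may assume $\mathcal{L}'(1,1) = 1$. For each choice of $\mathcal{L} = R_i$, I would try to fill in the remaining fifteen entries of $\mathcal{L}'$ subject to (i) its own Latin row and column conditions and (ii) the orthogonality requirement that, for every $k \in \{1,2,3,4\}$, the four entries of $\mathcal{L}'$ at the positions where $\mathcal{L} = k$ form a permutation of $\{1,2,3,4\}$. When $\mathcal{L} \in \{R_2, R_3, R_4\}$ I expect the system to be inconsistent: an explicit position-by-position fill-in produces a row or column conflict in every branch, reflecting the classical non-existence of a complete mapping of $\mathbb{Z}_4$ (which would otherwise furnish an orthogonal mate). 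When $\mathcal{L} = R_1$ the system is solvable, and the residual equivalences that preserve $R_1$ --- column permutations carried over to $\mathcal{L}'$, row permutations among rows $2,3,4$ arising from automorphisms of $\mathbb{Z}_2 \times \mathbb{Z}_2$, and symbol relabellings of $\mathcal{L}'$ fixing its $(1,1)$-entry --- are rich enough to collapse every solution onto the single $\mathcal{L}'$ displayed in the statement.

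The main obstacle is the bookkeeping in the last paragraph: the three non-existence arguments for $R_2, R_3, R_4$ must each be written out to confirm the conflict in every branch, and for $R_1$ one must identify the stabilizer of $R_1$ under the equivalence group carefully in order to verify that every successful fill-in lies in a single orbit. Once this is done, uniqueness of the OLS pair up to the allowed equivalences follows immediately.
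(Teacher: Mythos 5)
Your route is genuinely different from the paper's. The paper never classifies the first square in isolation: it normalizes the first rows of both squares and the first column of $M$, then lets the Latin conditions and the orthogonality constraints force the remaining entries of $M$ and $N$ \emph{in tandem}, using once the freedom to swap the two members of the (unordered) pair to reduce the case $N_{21}=4$ to $N_{21}=3$. Because every entry ends up forced, uniqueness is immediate and no orbit computation is ever needed. You instead classify $\mathcal{L}$ up to reduction into the four reduced Latin squares of order $4$ and then search for mates case by case. This is a viable decomposition and it makes the structural reason for the answer (Klein four-group versus $\mathbb{Z}_4$) transparent, but it is longer and it defers the decisive work to two finite checks that you do not carry out.

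Those two checks are where the gaps lie. First, $R_2$ and $R_4$ are \emph{not} symbol relabellings of the $\mathbb{Z}_4$ Cayley table --- a relabelling alone would destroy reducedness --- they are only isotopic to it. To import the classical fact that $\mathbb{Z}_4$ has no complete mapping, you must add the observation that having an orthogonal mate is equivalent to admitting a decomposition into four disjoint transversals, and that this is an isotopy invariant; otherwise you are back to the branch-by-branch fill-in you only promise to do. Second, and more seriously, the assertion that the stabilizer of $R_1$ is ``rich enough to collapse every solution onto the single $\mathcal{L}'$ displayed'' is precisely the uniqueness claim of the proposition and is given no argument: you would need to enumerate the orthogonal mates of $R_1$ (equivalently, its transversal decompositions together with the symbol labellings) and verify that the residual group acts transitively on them. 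Until that computation is supplied --- or replaced by a forced-entry argument in the style of the paper --- the uniqueness half of the statement is not proved.
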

	
	\begin{proof}
	Without loss of generality, we may assume 
		\begin{center}
	M=$\left[
            \begin{array}{cccc}
              1 & 2 & 3 & 4 \\
              2 &  &  &  \\
              3 &  &  &  \\
              4 &  &  &  \\
            \end{array}
          \right]$ and N=
          $
        \left[
           \begin{array}{cccc}
              1 & 2 & 3 & 4 \\
               &  &  &  \\
               &  &  & \\
               &  &  & \\
            \end{array}
          \right] $
	\end{center}
	
	If $N_{21}=3\Rightarrow N_{31}=4$ and $N_{41}=2$.\\
	If $N_{21}=4$, then change the roles of M and N, and reorder the rows. We are back to the $N_{21}=3$ case. So we have 
		\begin{center}
		M=$\left[
            \begin{array}{cccc}
              1 & 2 & 3 & 4 \\
              2 &  &  &  \\
              3 &  &  &  \\
              4 &  &  &  \\
            \end{array}
          \right]$ and N=
          $
        \left[
           \begin{array}{cccc}
              1 & 2 & 3 & 4 \\
              3 &  &  &  \\
              4 &  &  & \\
              2 &  &  & \\
            \end{array}
          \right] $
	\end{center}
	
	Now, either $M_{22}=M_{44}=3$ or $M_{24}=M_{42}=3$.\\
	But if $M_{22}=M_{44}=3$, the (3,2) pair requires $N_{22}=2$ or $N_{44}=2$, contradiction.\\
	So $M_{24}=M_{42}=3 \Rightarrow N_{24}=2$ and $N_{42}=1$.
	
	\begin{center}
		M=$\left[
            \begin{array}{cccc}
              1 & 2 & 3 & 4 \\
              2 &  &  & 3 \\
              3 &  &  &  \\
              4 & 3 &  &  \\
            \end{array}
          \right]$ and N=
          $
        \left[
           \begin{array}{cccc}
              1 & 2 & 3 & 4 \\
              3 &  &  & 2 \\
              4 &  &  & \\
              2 & 1 &  & \\
            \end{array}
          \right] $
	\end{center}
	
	We immediately get $N_{44}=3,\ N_{34}=1,\ N_{43}=4,\ N_{23}=1,\ N_{33}=2,\ N_{32}=3,\ N_{22}=4$
	
	\begin{center}
		M=$\left[
            \begin{array}{cccc}
              1 & 2 & 3 & 4 \\
              2 &  &  & 3 \\
              3 &  &  &  \\
              4 & 3 &  &  \\
            \end{array}
          \right]$ and N=
          $
        \left[
           \begin{array}{cccc}
              1 & 2 & 3 & 4 \\
              3 & 4 & 1 & 2 \\
              4 & 3 & 2 & 1\\
              2 & 1 & 4 & 3\\
            \end{array}
          \right] $
	\end{center}
	
	In order to complete the remaining entries of M, look at (1,2). It can only occur at ($M_{33},N_{33}$), so $M_{33}=1$.\\
	So $M_{23}=4,\ M_{22}=1,\ M_{32}=4,\ M_{34}=2,\ M_{43}=2,\ M_{44}=1$
	
	\begin{center}
		M=$\left[
            \begin{array}{cccc}
              1 & 2 & 3 & 4 \\
              2 & 1 & 4 & 3 \\
              3 & 4 & 1 & 2 \\
              4 & 3 & 2 & 1 \\
            \end{array} ,    
          \right]$ and N=
          $
        \left[
           \begin{array}{cccc}
              1 & 2 & 3 & 4 \\
              3 & 4 & 1 & 2 \\
              4 & 3 & 2 & 1\\
              2 & 1 & 4 & 3\\
            \end{array}
          \right] $
	\end{center}
	
	So the proof is complete.
	\end{proof}

	This pair of OLS of order 4 gives us an abstract (4,4)-net. 
\newpage

\subsection{The Incidence Structure Of The Possible (4,4)-Net}	
		
		Suppose that we have a hypothetical (4,4)-net ($\mathcal{A}_1,\mathcal{A}_2,\mathcal{A}_3,\mathcal{A}_4,\mathcal{X}$) in $\mathbb{C}P^2$. Denote the sets of its lines by 	
		$$
		\mathcal{A}_k = \{ \ell_{k1}, \ldots, \ell_{k4} \} \ where \ k \in \{1,\ldots, 4\}
		$$
		and $\mathcal{X}= \{p_{ij} \} \ where \ i,j\in \{1\ldots 4 \} $ 

		and the points are labeled as $p_{ij}=\ell_{1i}\cap \ell_{2j}$.

		Then by regarding the OLS of order 4 we find the incidence relations,		
		$$ p_{ij} = \ell_{1i}\cap \ell_{2j} \cap \ell_{3M_{ij}} \cap \ell_{4N_{ij}} $$					\subsection{A Tropicalization Of The Possible (4,4)-Net}	

By using the fundamental theorem of projective geometry we can find a unique transformation between the lines $\ell_{11}, \ell_{12}, \ell_{21}$ and $\ell_{22}$ and $z=0$, $x+y+z=0$, $x=0$ and $y=0$ respectively. Note that no 3 of $\ell_{11},\ell_{12},\ell_{21}$ and $\ell_{22}$ are concurrent because of the net axioms.

	Now we will find the new location of the points $p_{11}, p_{12}, p_{21}$ and $p_{22}$ after the transformation. 
	
	\begin{eqnarray*}
	p_{11} &=& \ell_{11} \cap \ell_{21} = (0:1:0)  \\
	p_{12} &=& \ell_{11} \cap \ell_{22} = (1:0:0)  \\ 
	p_{21} &=& \ell_{12} \cap \ell_{21} = (0:1:-1)   \\
	p_{22} &=& \ell_{12} \cap \ell_{22} = (1:0:-1)  
	\end{eqnarray*}
	
The incidence relations immediately give two more pieces of information, that is, the equations of the lines $\ell_{31}$ and $\ell_{32}$. Since $\ell_{31}$ passes through the points $p_{11}$ and $p_{22}$, the equation of $\ell_{31}$ is $x+z=0$. Similarly $\ell_{32}$ passes through $p_{12}$ and $p_{21}$, therefore its equation is $y+z=0$.

	\subsubsection{The Tropicalization Of The Lines}	
	Writing the dual coordinates of $\ell_{11}, \ell_{12}, \ell_{21}, \ell_{22}, \ell_{31}$ and $\ell_{32}$ in columns, we get the following matrix:
			$$
				\left.
					\begin{matrix}
						& \ell_{11} & \ell_{12} & \ell_{21} & \ell_{22} & \ell_{31} & \ell_{32} \\		
						\left[
						\begin{matrix} \\ \\ \\ \end{matrix} \right. &
						\begin{matrix} 0 \\ 0 \\ 1 \end{matrix}&
						\begin{matrix} 1 \\ 1 \\ 1 \end{matrix} &
						\begin{matrix} 1 \\ 0 \\ 0 \end{matrix}&
						\begin{matrix} 0 \\ 1 \\ 0 \end{matrix} &
						\begin{matrix} 1 \\ 0 \\ 1 \end{matrix}&
						\begin{matrix} 0 \\ 1 \\ 1 \end{matrix} &
						\left.
						\begin{matrix} \\ \\ \\ \end{matrix} \right] &			
					\end{matrix}
				\right.
			$$

Tropicalize the net as explained in \textit{Section 2.3} using the matrix
	$$
           T=\left[
             \begin{array}{ccc}
              t-t^2 & t^2-t^4 & t^4 \\
              t^3+t^2 & 1 & -t^3 \\
              t^2 & t^5 & 1\\
            \end{array}
          \right] $$
          
\begin{prop}
The matrix above has nonzero determinant except for finitely many values of t.
\end{prop}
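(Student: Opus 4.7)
The plan is to view $\det T$ as an element of $\mathbb{C}[t]$ and show that it is not the zero polynomial; the conclusion then follows because a nonzero univariate polynomial has only finitely many roots, so for all but finitely many $t \in \mathbb{C}$ the matrix $T$ is invertible.

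To show $\det T \not\equiv 0$, I would expand along the first row and track only the top-degree monomial contributed by each of the three cofactor terms, rather than writing out the full expansion. The $(1,3)$-cofactor is multiplied by $t^{4}$, and the $2\times 2$ minor there is $(t^{3}+t^{2})\cdot t^{5} - 1\cdot t^{2} = t^{8}+t^{7}-t^{2}$, which produces a term $t^{12}$. A quick degree count shows that the $(1,1)$-cofactor contributes a polynomial of degree at most $10$ (since the minor has top term $t^{8}$ and is multiplied by $t-t^{2}$), and the $(1,2)$-cofactor contributes a polynomial of degree at most $9$ (the minor has top term $t^{5}$ and is multiplied by $t^{2}-t^{4}$, with a sign). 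Hence no cancellation can affect the $t^{12}$ coefficient, and $\det T$ has a nonzero leading term $t^{12}$.

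Therefore $\det T$ is a nonzero polynomial in $t$ of degree exactly $12$, so it has at most $12$ complex roots. For every other value of $t$, the matrix $T$ is invertible, which is what was claimed.

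I do not anticipate any real obstacle here; the only care required is in the bookkeeping of which cofactor term can reach the maximal degree, and verifying that the maximal-degree contribution indeed comes from a single monomial so that it cannot be killed by the subtraction in the cofactor expansion.
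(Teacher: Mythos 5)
Your proof is correct and follows essentially the same route as the paper: both reduce the claim to the fact that $\det T$ is a nonzero polynomial in $t$ and hence has finitely many roots. The paper simply computes the determinant in full ($t-t^2-t^4-t^5+2t^9-t^{10}+t^{11}+t^{12}$), whereas your leading-term bookkeeping is a valid shortcut that correctly isolates the $t^{12}$ term from the $(1,3)$-cofactor.
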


\begin{proof}
The determinant of $T$ is $\det(T)=t-t^2-t^4-t^5+2t^9-t^{10}+t^{11}+t^{12}$. $T=0$ has 4 real and 8 complex roots. Hence the determinant is nonzero except for these values of \textit{t}.
\end{proof}

This tropicalization sends the lines $\ell_{11}, \ell_{12}, \ell_{21}, \ell_{22}, \ell_{31}$ and $\ell_{32}$ to the tropical lines $\L_{11}, \L_{12}, \L_{21}, \L_{22}, \L_{31}$ and  $\L_{32}$ respectively, whose centers are listed below in columns.
			$$
				\left.
					\begin{matrix}
						& \L_{11} & \L_{12} & \L_{21} & \L_{22} & \L_{31} & \L_{32} \\		
						\left[
						\begin{matrix} \\ \\ \end{matrix} \right. &
						\begin{matrix} -4 \\ -3 \end{matrix}&
						\begin{matrix} 4 \\ 3 \end{matrix} &
						\begin{matrix} 0 \\ -1 \end{matrix}&
						\begin{matrix} 1 \\ 5 \end{matrix} &
						\begin{matrix} -2 \\ 0 \end{matrix}&
						\begin{matrix} 3 \\ 2 \end{matrix} &
						\left.
						\begin{matrix} \\ \\ \end{matrix} \right] &		
					\end{matrix}
				\right.
			$$

\begin{lem} Let $ax+by+cz=0$ be a line in $\mathbb{P}^2$ and $\ell_{ij}=[a:b:c]\in (\mathbb{P}^2)^*$ We use the transformation 
	          $$
           \left[
             \begin{array}{ccc}
              t-t^2 & t^2-t^4 & t^4 \\
              t^3+t^2 & 1 & -t^3 \\
              t^2 & t^5 & 1\\
            \end{array}
          \right] $$
	Say $\psi(\ell_{ij})$ denotes the coordinates of the center of $\L_{ij}$. Then
	\renewcommand{\labelenumi}{\roman{enumi}.}
	\renewcommand{\labelenumii}{\roman{.}.}
	\begin{enumerate}
		\item 
			\begin {itemize}
			\item $\psi([0:0:1])=(-4,-3)$
			\item $\psi([0:1:0])=(1,5)$
			\item $\psi([1:0:0])=(0,-1)$
			\item $\psi([1:1:1])=(4,3)$
			\item $\psi([1:0:1])=(-2,0)$
			\end{itemize}
		\item
			\begin {itemize}
			\item $\psi([a:1:1])=(3,2)\ for\ a\neq 1$
			\item $\psi([a:0:1])=(-2,-1) \ for \ a\notin \{0,1\}$
			\item $\psi([1:b:1])=(1,3) \ for \ b\notin \{0,1\} $
			\end{itemize}
          	\item For all other values of $[a:b:c]\ ,\ \psi([a:b:c])=(1,2) $
        \end{enumerate}
        In particular for the values of $(x,y)$ in $Part\ (i)$, $\psi^{-1}(x,y)$ contains a unique line.
\end{lem}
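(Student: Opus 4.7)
The plan is to push a general dual coordinate $[a:b:c]$ through the matrix $T$ and read off the three resulting polynomial coefficients,
$$ A(t) = (t-t^2)a + (t^2-t^4)b + t^4 c, \qquad B(t) = (t^3+t^2)a + b - t^3 c, \qquad C(t) = t^2 a + t^5 b + c, $$
which are the coefficients of $x$, $y$, $z$ in the equation of the transformed line. In the chart $z \neq 0$, Proposition 2.9 then states that the center of the resulting tropical line is $\psi([a:b:c]) = (n_C - n_A,\, n_C - n_B)$, where $n_f$ denotes the degree in $t$ of the polynomial $f$. Note that none of $A$, $B$, $C$ vanishes identically, since collecting powers of $t$ shows each would force $a = b = c = 0$, so these degrees are all well defined.

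With this reduction, the proof becomes a finite case analysis driven by three leading-coefficient conditions: $n_A = 4$ iff $c \neq b$, $n_B = 3$ iff $a \neq c$, and $n_C = 5$ iff $b \neq 0$. In the generic situation, none of these degenerates, giving $(n_A, n_B, n_C) = (4,3,5)$ and hence the center $(1,2)$ of Part (iii). Each special center in Part (ii) corresponds to exactly one of the three leading coefficients vanishing; for each such case I then read off the next-highest term of the affected polynomial to determine its new degree. For instance, $[a:1:1]$ with $a \neq 1$ forces $c = b = 1$, which drops $n_A$ to $2$ with leading coefficient $1 - a \neq 0$, while $n_B = 3$ and $n_C = 5$ persist, yielding the center $(3,2)$. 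The exclusions $a \neq 1$, $a \notin \{0,1\}$, $b \notin \{0,1\}$ in the three subcases of Part (ii) are exactly the side conditions needed to prevent additional, unaccounted degree drops.

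For Part (i), several cancellations occur simultaneously, and I carry out the analysis in reverse to obtain both the forward statements and the uniqueness claim. The center $(-4,-3)$ forces $n_C = 0$, which requires the coefficient of $t^5$ and the coefficient of $t^2$ in $C$ to vanish, i.e.\ $b = 0$ and $a = 0$; this pins $[a:b:c]$ down to $[0:0:1]$, and back-substitution gives $(n_A, n_B, n_C) = (4,3,0)$. Similarly $(1,5)$ forces $n_B = 0$ and $n_C = 5$, yielding $a = c = 0$ with $b \neq 0$, hence $[0:1:0]$; the center $(0,-1)$ forces $b = 0$ together with $c = 0$, giving $[1:0:0]$; $(-2,0)$ forces $a = c$ and $b = 0$, giving $[1:0:1]$; and $(4,3)$ forces the double cancellation $c = b$ and $a = c$, giving $[1:1:1]$. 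In each case the system of vanishing conditions has a unique projective solution, which establishes the final sentence of the lemma.

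The main obstacle will be bookkeeping: for each combination of vanishing leading coefficients one must verify that the subsequently revealed lower-order terms are genuinely nonzero throughout the stated parameter range, so that the claimed degrees are correct rather than themselves dropping further. Once these side checks are handled systematically, the entire lemma reduces to direct polynomial arithmetic on the three expressions $A(t), B(t), C(t)$ above.
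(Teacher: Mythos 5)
Your proposal is correct and follows essentially the same route as the paper: compute the image $T\cdot[a:b:c]^T$ to get the three coefficient polynomials, read off their $t$-degrees, and apply Proposition 2.9 to locate the center at $(n_C-n_A,\,n_C-n_B)$. The paper's own proof consists of exactly this matrix product followed by ``replace the values for $a$, $b$ and $c$''; your systematic tracking of when each leading coefficient vanishes, and in particular your reverse analysis establishing the uniqueness claim in the final sentence, supplies the case-checking and the justification that the paper leaves implicit.
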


\begin{proof} 
	          $
           \left[
             \begin{array}{ccc}
              t-t^2 & t^2-t^4 & t^4 \\
              t^3+t^2 & 1 & -t^3 \\
              t^2 & t^5 & 1\\
            \end{array}
          \right] 
          $ 
       $
                   \left[
            \begin{array}{c}
             a \\
             b \\
             c\\
           \end{array}
         \right]
          $
          =
            $
                     \left[
             \begin{array}{c}
             a(t-t^2) + b(t^2-t^4) + ct^4 \\
              a(t^3+t^2)+ b-ct^3 \\
              at^2 + bt^5 + c\\
            \end{array}
          \right]
          $
          \\ \\ 
      If we replace the values for $a,\ b$ and $c$, we get the centers above.
\end{proof}

The centers of $L_{11}, \L_{12}, \L_{21}, \L_{22}, \L_{31}$ and $\L_{32}$ were given before \textit{Lemma 3.1}. The different possible centers for the lines other than $\L_{11}, \L_{12}, \L_{21}, \L_{22}, \L_{31}$ and $\L_{32}$ are (3,2), (-2,-1), (1,3), (1,2). The graphs of the lines $\L_{11}, \L_{12}, \L_{21}, \L_{22}, \L_{31}$ and $\L_{32}$ are given in \textit{Figure 7}.

\begin{figure} \begin{center} \resizebox{8cm}{6cm} {\includegraphics{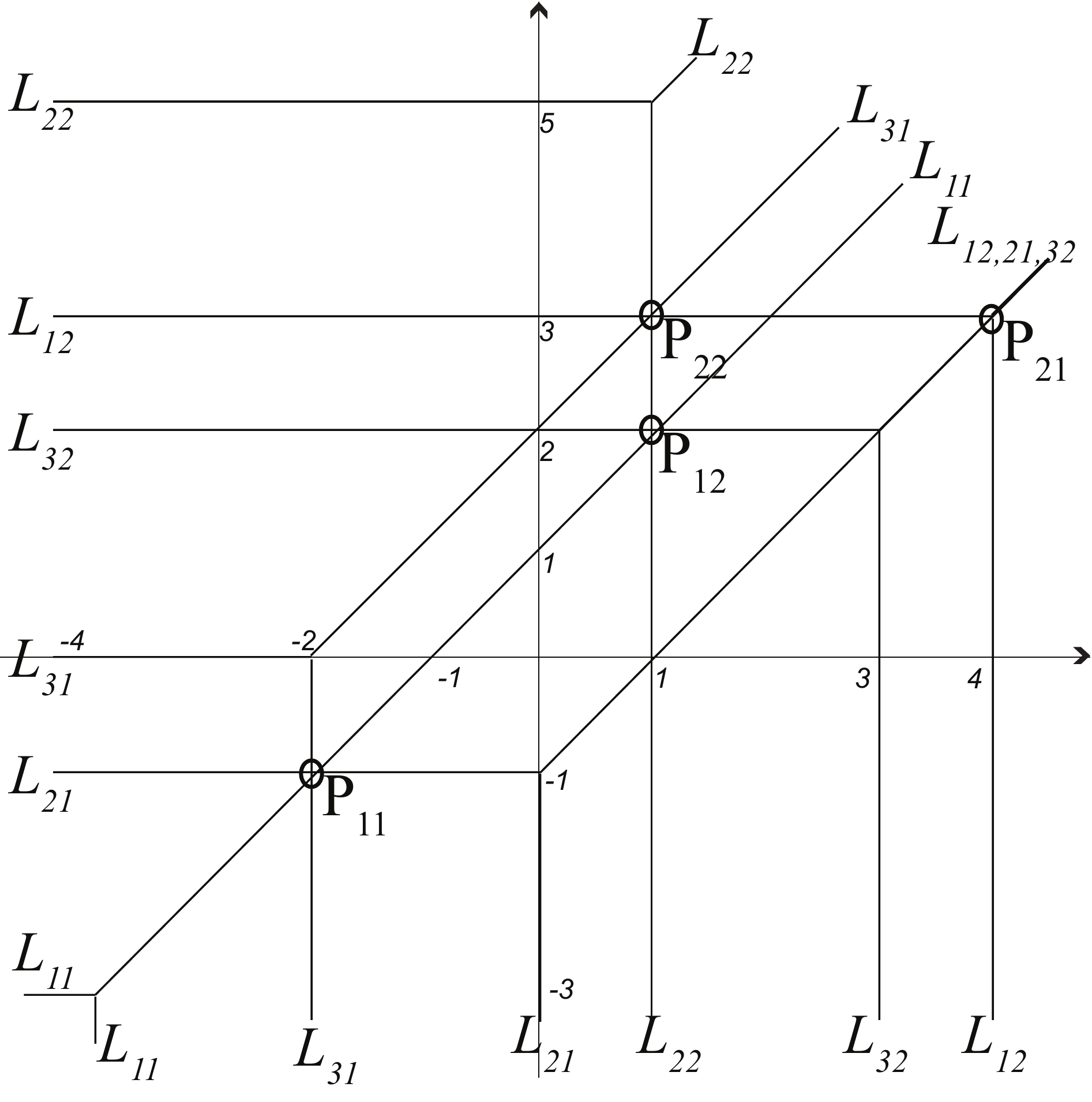}} \caption{The lines and points in the tropical plane after tropicalization} \label{fig:fourlines} \end{center}\end{figure}

\newpage

\subsection{The Tropicalization Of The Points}	

\begin{lem}  If a matrix $M(t)$ is used to tropicalize the coordinates of the lines in $(\mathbb{P}^2)^*$, then adj$(M(t))$ can be used to tropicalize the coordinates of the points in $\mathbb{P}^2$.
\end{lem}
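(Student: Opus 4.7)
The plan is to exploit the classical projective duality between points and lines in $\mathbb{P}^2$. Writing a line as a column vector $\ell = [a{:}b{:}c]^T$ in $(\mathbb{P}^2)^*$ and a point as a column vector $p = (x{:}y{:}z)^T$ in $\mathbb{P}^2$, incidence is encoded by the bilinear pairing $\ell^T p = ax+by+cz = 0$. Since $M(t)$ acts on the line dual coordinates by $\ell \mapsto M(t)\ell$, the first step is to identify which transformation $p \mapsto N(t)p$ on points makes every incidence $\ell^T p = 0$ descend to $(M(t)\ell)^T (N(t)p) = 0$. Expanding this as $\ell^T M(t)^T N(t)\, p = 0$ for every incident pair forces $M(t)^T N(t) = \lambda(t)\, I$ for some nonzero scalar function, so $N(t) = \lambda(t)\, (M(t)^T)^{-1}$, with $\lambda(t)$ arbitrary because we work projectively.

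Next, I would invoke the classical adjugate identity $M(t)\cdot \mathrm{adj}(M(t)) = \det M(t)\cdot I$, giving $M(t)^{-1} = \det(M(t))^{-1}\,\mathrm{adj}(M(t))$ and hence $(M(t)^T)^{-1} = \det(M(t))^{-1}\,\mathrm{adj}(M(t))^T$. Setting $\lambda(t) = \det M(t)$ clears the denominator and yields the polynomial matrix $N(t) = \mathrm{adj}(M(t))^T$, or equivalently the matrix $\mathrm{adj}(M(t))$ acting on the opposite side; up to the paper's convention for whether point coordinates are written as columns or rows, this is exactly the assertion of the lemma. By Proposition 3.1, $\det M(t)$ is nonzero for all but finitely many $t$, so $N(t)$ is genuinely invertible on the relevant open set and the induced map on points is a projective transformation.

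Finally, I would verify that this substitution is harmless at the tropical level. The only thing that was changed in passing from the true inverse transpose to $\mathrm{adj}(M(t))$ was an overall multiplication by the polynomial $\det M(t)$. In homogeneous projective coordinates this is invisible, and in the logarithmic limit $\lim_{t\to\infty}\log_t(\cdot)$ it simply contributes a common additive shift to all three coordinates of every point, which is the identity transformation on $\mathbb{P}^2$. Consequently the centers and edge directions of the tropicalized points are unaffected, and $\mathrm{adj}(M(t))$ does compute the tropicalization of the point coordinates. The main obstacle is purely bookkeeping: keeping the column/row and transpose conventions consistent so that $\mathrm{adj}(M(t))$ (and not $\mathrm{adj}(M(t))^T$) is the matrix that appears in the final statement; the actual mathematical content is just the inverse-adjugate identity together with the projective irrelevance of the determinant factor.
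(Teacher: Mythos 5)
Your proposal is correct and follows essentially the same route as the paper: both use the incidence pairing $\ell^T p=0$ to deduce that points must transform by $\det(M)\,(M(t)^T)^{-1}$, then identify this with the adjugate/cofactor matrix, the determinant factor being projectively (and hence tropically) irrelevant. The only difference is cosmetic --- the paper's ``$\mathrm{adj}$'' is the untransposed cofactor matrix, which is exactly the $\mathrm{adj}(M(t))^T$ of the standard convention you use, so your bookkeeping caveat resolves in the paper's favor.
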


\begin{proof} 
	Let $p$ be the coordinates in $\mathbb{P}^2$ and $\ell$ be the coordinates in $(\mathbb{P}^2)^*$ as column vectors. Let  $\ell_t =M(t)\ell$, and $p_t$ the coordinates of the point after the relevant transformation. We know that $p^T\ell = 0$ $\Leftrightarrow$ $p$ lies on $\ell$ $\Leftrightarrow$ $p_t$ lies on $\ell_t$ $\Leftrightarrow p_t^T\ell_t= 0 $ and $\ell_t = M\ell$. \\ 
We claim that $p_t^T = p^TM^{-1}$. This is because
	$$
		p_t^T \ell_t = p^TM^{-1}M\ell = p^T\ell = 0 \\
	$$
	Multiplying both sides of $p_t^T=p^TM^{-1}$ by $\det(M)$ does not change the homogenous coordinates. Therefore 
	$$
		p_t^T = p^TM^{-1}(\det{M}) = p^T(\mathrm{adj}M)^T 
	$$
	where $(\mathrm{adj}M)^T$ is the transpose of adj$M$. 
		          $$ \Rightarrow p_t =  (\mathrm{adj}M)p $$
\end{proof}
 \newpage

Now let

	$$ M=
           \left[
             \begin{array}{ccc}
              t-t^2 & t^2-t^4 & t^4 \\
              t^3+t^2 &1 & -t^3 \\
              t^2 & t^5 & 1\\
            \end{array}
          \right] $$

	Then
          $$ \mathrm{adj}(M)=
           \left[
             \begin{array}{ccc}
             	\left| \begin{array}{cc} 1 & -t^3 \\ t^5 &1 \\ \end{array} \right|  & 
            	-\left|  \begin{array}{cc} t^2+t^3 & -t^3 \\  t^2 & 1 \\ \end{array} \right|  & 
            	\left| \begin{array}{cc} t^2+t^3 & 1 \\ t^2 & t^5 \\ \end{array} \right| \\
              
              	- \left| \begin{array}{cc} t^2-t^4 & t^4 \\ t^5 &1 \\ \end{array}\right|  &               
		 \left| \begin{array}{cc} t-t^2 & t^4 \\ t^2 & 1 \\ \end{array} \right|  & 
		- \left| \begin{array}{cc} t-t^2 & t^2-t^4 \\ t^2 & t^5 \\ \end{array} \right| \\

             	\left| \begin{array}{cc} t^2-t^4 & t^4 \\ 1 & -t^3 \\ \end{array} \right|  & 
            	-\left|  \begin{array}{cc} t-t^2 & t^4 \\  t^2+t^3 & -t^3 \\ \end{array} \right|  & 
            	\left| \begin{array}{cc} t-t^2 & t^2-t^4 \\ t^2+t^3 & 1 \\ \end{array} \right| \\
	
            \end{array}
          \right] $$
          
                    $$ = 
           \left[
             \begin{array}{ccc}
              1+t^8 & -t^2-t^3-t^5 & -t^2+t^7+t^8 \\
              -t^2+t^4+t^9 & t-t^2-t^6 & t^4-2t^6+t^7 \\
              -t^4-t^5+t^7 & t^4-t^5+t^6+t^7 & t-t^2-t^4-t^5+t^6+t^7 \\
            \end{array}
          \right] $$	
          
\begin{rem} In the process of dehomogenization the effect of the transformation on the points is the reverse of the effect of the transformation on the dual coordinates. For the points we subtract the highest power of the third row from the highest power of the first row and the highest power of the second row. The first and the second numbers shall be the first and second coordinates of the location of the point.
\end{rem}

%\newpage

\begin{lem} Let $p_{ij}=(a:b:c) \in \mathbb{P}^2$. We use the transformation 
	$$
           \left[
             \begin{array}{ccc}
              1+t^8 & -t^2-t^3-t^5 & -t^2+t^7+t^8 \\
              -t^2+t^4+t^9 & t-t^2-t^6 & t^4-2t^6+t^7 \\
              -t^4-t^5+t^7 & t^4-t^5+t^6+t^7 & t-t^2-t^4-t^5+t^6+t^7 \\
            \end{array}
          \right]
          $$
          
          Say $\varphi (p_{ij})=P_{ij}$. Then 
          
          \renewcommand{\labelenumi}{\roman{enumi}.}
	\renewcommand{\labelenumii}{\roman{.}.}
	\begin{enumerate}
		\item 
			\begin {itemize}
			\item $\varphi((0:1:0))=(-2,-1)$
			\item $\varphi((1:0:-1))=(1,3)$
			\item $\varphi((0:1:-1))=(4,3)$
			\end{itemize}
		\item
			\begin {itemize}
			\item $\varphi((1:b:-1-b))=(2,3)\ for\ b\neq 0$
			\item $\varphi((0:b:1))=(1,0) \ for \ b\neq -1$
			\item $\varphi((1:b:-1))=(0,2) \ for \ b\neq 0 $
			\end{itemize}
          	\item For all other values of $(a:b:c)\ ,\ \varphi((a:b:c))=(1,2) $
        \end{enumerate}
        
         In particular each of $\varphi^{-1}(-2,-1),\ \varphi^{-1}(1,3),\ \varphi^{-1}(4,3)$ is a single point.

\end{lem}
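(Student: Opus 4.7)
Proof Plan. The strategy is a direct calculation using Remark 3.1. For each point $p=(a:b:c)$ I would compute the column vector
\[
\begin{pmatrix} A(t) \\ B(t) \\ C(t) \end{pmatrix} = \mathrm{adj}(M)\begin{pmatrix} a \\ b \\ c \end{pmatrix},
\]
and then read off $\varphi(p) = (\deg A - \deg C,\ \deg B - \deg C)$.

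The key preparatory observation is to identify the generic highest-power terms. Reading off from the rows of $\mathrm{adj}(M)$, one finds that $A(t)$ has leading $(a+c)t^8$, $B(t)$ has leading $a\,t^9$, and $C(t)$ has leading $(a+b+c)\,t^7$. Consequently, whenever none of the three linear forms $a$, $a+c$, $a+b+c$ vanishes on $(a,b,c)$, we obtain $(\deg A - \deg C,\ \deg B - \deg C) = (8-7,\ 9-7) = (1,2)$, which is the generic subcase of part (iii).

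I would then stratify the remaining points according to how many of the conditions $a=0$, $a+c=0$, $a+b+c=0$ hold. If exactly one holds, the projective point is forced (up to rescaling) into one of the three parametric families of part (ii): $(0:b:1)$ with $b \neq -1$ when $a=0$; $(1:b:-1)$ with $b \neq 0$ when $a+c=0$; and $(1:b:-1-b)$ with $b \neq 0$ when $a+b+c=0$. If exactly two conditions hold, one is similarly forced into one of the three distinguished points $(0:1:0)$, $(1:0:-1)$, $(0:1:-1)$ of part (i); all three conditions together would yield $(0:0:0)$, which is excluded. For each of the six special cases I would substitute the appropriate values of $(a,b,c)$ into the formulas for $A, B, C$, identify the highest power of $t$ that survives the cancellation, and compute the two differences. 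For instance, $(1:b:-1)$ produces $A(t) = 1 + (1-b)t^2 - bt^3 - bt^5 - t^7$ (degree $7$), $\deg B = 9$, and $\deg C = 7$, giving the claimed value $(0,2)$.

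The in-particular assertion that each of $\varphi^{-1}(-2,-1),\ \varphi^{-1}(1,3),\ \varphi^{-1}(4,3)$ is a single point follows directly from the stratification, since only one projective point realizes each pair of simultaneous vanishing conditions and the four locations arising in parts (ii)--(iii) are distinct from the three in (i). I do not foresee a deep obstacle, but the main opportunity for arithmetic error is the family $(1:b:-1-b)$: here $a+b+c=0$ kills the leading $t^7$ of $C$, and one must verify that the subleading term does not vanish for any admissible $b$. A careful degree-by-degree expansion shows that the $t^6$ coefficient of $C$ equals $-1$, independent of $b$, so $\deg C=6$ exactly, and the case analysis goes through.
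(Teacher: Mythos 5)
Your proposal is correct and follows essentially the same route as the paper, which simply defers to the substitution argument of Lemma 3.1: multiply $\mathrm{adj}(M)$ by $(a,b,c)^T$ and read off $(\deg A-\deg C,\ \deg B-\deg C)$ per Remark 3.1. Your version is in fact more complete than the paper's, since you make explicit the stratification by the vanishing of $a$, $a+c$, $a+b+c$ (the leading coefficients of $B$, $A$, $C$) and verify the surviving subleading terms, which the paper leaves implicit.
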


\begin{proof}
This follows from the same argument as in the proof of \textit{Lemma 3.1}
\end{proof}

\subsubsection{Point line table}	

We collect below the information about the points on the tropical plane which can either be the center of a point or a tropical line after the degeneration. The lines column gives the classical equation of the line if the coordinate is a center of a line of the net. The points column gives the location of the classical point if there is a point of the net on that coordinate. If there is no a point or line there we write NS as an abbreviation of Not Special. Note that we show the coordinate of a point by $(a:b:c)$ and of a line by $[d:e:f]$. We write all the relations between $a,b,c$ and $d,e,f$ in the table.
			          			
	$$
		\left.
			\begin{matrix}
				& coordinates & points &lines & \\
				& & (a:b:c) & [d:e:f] & \\		
				\left[
				\begin{matrix} \\ \\ \\ \\ \\ \\ \\ \\ \\ \\ \\ \\ \\ \\ \\ \\ \\ \\ \\ \\ \\ \\ \\ \\ \\ \\ \end{matrix} \right. &
				\begin{matrix} (1,2) \\  \\ (0,2) \\ \\ \\ (2,3)\\ \\  (1,0) \\ \\ (4,3) \\ \\  (1,3) \\ \\ (-2,-1)  \\ \\ \\ (3,2) \\ \\ \\  (-2,0) \\ \\ (0,-1) \\ \\ (1,5) \\  \\ (-4,-3) \\ \end{matrix} &
				\begin{matrix}\\ no \ relations \\ \\ a+c=0 \\ \\ \{a+b+c=0 \} \ or\\ \{ c=0, a+b=0\} \\ \\ \{a=0\} \ or\ \{ a,b=0\} \\ \\ a=0, b+c=0\\ \\ b =0, a+c=0\\ \\ a=0,c=0\\ \\ \\ NS \\ \\ \\ NS \\ \\ NS \\ \\ NS \\ \\ NS \\ \\ \end{matrix} &
				\begin{matrix}\\ \\ no \ relations \\ \\ NS \\ \\  \\ NS \\ \\ NS \\ \\ d=e=f \\ \\ d=f \\ \\ e=0 \\ \\ \\ \{e=f\} \ or\\ \{d=0,e=f\} \\ \\ e=0, d=f \\ \\ e=0, f=0 \\ \\ d=0, f=0 \\ \\ d=0,e=0 \\ \\ \\ \end{matrix}&

				\left.
				\begin{matrix} \\ \\ \\ \\ \\ \\ \\ \\ \\ \\ \\ \\ \\ \\ \\ \\ \\ \\ \\ \\ \\ \\ \\ \\ \\ \\ \end{matrix} \right] &					\end{matrix}
				\right.
	$$ 

\subsubsection{Determining Other Lines And Points}

Up to now we know the equations of $\L_{11}, \L_{12}, \L_{21}, \L_{22}, \L_{31}, \L_{32} $ and the location of the points $P_{11}, P_{12}, P_{21}, P_{22}$. Now we will determine some of the other lines and points by using the tables above. 

By using the incidence relations, $\L_{31}$ passes through $P_{11}, P_{22}, P_{33}, P_{44}$. Let us determine the locations of $P_{33}$ and $P_{44}$. The line $\L_{31}$ has its center at $(-2,0)$. The only location of the points on $\L_{31}$ may be $(-2,-1), (0,2), (1,3)$. Now, we check the point line table to look at the classical coordinates. $(-2,-1)$ belongs to $(0:1:0)$ which is $p_{11}$. The coordinate $(1,3)$ belongs to $(1:0:-1)$ which is $p_{22}$. The coordinate $(0,2)$ belongs to $(1:b:-1)$ where $b\in \mathbb{Z}-\{0\}$. Therefore $p_{33}=(1:m_1:-1)$ and $p_{44}=(1:m_2:-1)$ where $m_1,m_2 \in \mathbb{Z}-\{0\}$ $m_1\neq m_2$

Similarly considering $P_{11}, P_{21},  P_{31}, P_{41}$ on $\L_{21}$, we get  $p_{31}=(0:1:t_1)$ and $p_{41}=(0:1:t_2)$ where $t_1,t_2 \in \mathbb{Z}-\{-1,0\}$ $t_1\neq t_2$

By the same methods, looking at $P_{21}, P_{22},  P_{23}, P_{24}$ on $\L_{12}$, we get $p_{23}=(1:s_1:-1-s_1)$ and $p_{24}=(1:s_2:-1-s_2)$ where $s_1,s_2 \in \mathbb{Z}-\{0\}$ $s_1\neq s_2$ 

Now we want to determine the center of the line $\L_{41}$. The line $\L_{41}$ passes through $P_{11}$. The only possible centers for a line passing through $P_{11}$ are $(-2,-1), (-2,0), (0,-1)$ $(-4,-3)$. Considering the point-line table, if a line has a center at $(-2,0)$ then its equation is $x+z=0$, so the line is $\ell_{31}$. Similarly if a line has a center at $(0,-1)$ then its equation is $x=0$, so the line is $\ell_{21}$. If a line has a center at $(-4,-3)$ then its equation is $z=0$, so the line is $\ell_{11}$. Therefore the only possible center for $\L_{41}$ is $(-2,-1)$ and the equation of $\ell_{41}$ is $x+k_1z=0$ where $k_1\in \mathbb{Z}-\{0,1\}$.

Similarly considering $p_{21}$ on $\L_{43}$, we get that the center of $\L_{43}$ is $(3,2)$ and the equation of $\ell_{43}$ is $k_2x+y+z=0$ where $k_2 \in \mathbb{Z}-\{0,1\}$.

If we make similar calculations for $\L_{44}$, by using that $P_{22}$ lies on $\L_{44}$ we get that the center of $\L_{44}$ is  $(1,3)$ and the equation of $\ell_{44}$ is $x+k_3y+z=0$ where $k_3\in \mathbb{Z}-\{0,1\}$.

\newpage

Up to now we have the following data:

	\begin{eqnarray*}
		\ell_{11} &:& z = 0  \\
		\ell_{12} &:&  x+y+z=0 \\
		\ell_{21} &:& x=0 \\
		\ell_{22} &:& y=0 \\
		\ell_{31} &:& x+z = 0 \\
		\ell_{32} &:& y+z = 0 \\
		\ell_{41} &:& x+k_1z=0 \\
		\ell_{43} &:& k_2x+y+z=0 \\
		\ell_{44} &:& x+k_3y+z = 0 
	\end{eqnarray*}			
	\begin{eqnarray*}
		p_{11} &:& (0:1:0)  \\
		p_{12} &:&  (1:0:0) \\
		p_{21} &:& (0:1:-1) \\
		p_{22} &:& (1:0:-1) \\
		p_{23} &:& (1:s_1:-1-s_1) \\
		p_{24} &:& (1:s_2:-1-s_2) \\
		p_{31} &:& (0:1:t_1) \\
		p_{33} &:& (1:m_1:-1) \\
		p_{41} &:& (0:1:t_2) \\
		p_{44} &:& (1:m_2:-1) 
	\end{eqnarray*}
	
	\begin{eqnarray*}
		k_1,k_2,k_3\in \mathbb{Z}-\{0,1\} \\ 
		m_1,m_2 \in \mathbb{Z}-\{0\}\ and \ m_1\neq m_2 \\
		s_1,s_2 \in \mathbb{Z}-\{0\}\   and \ s_1\neq s_2 \\
		t_1,t_2 \in \mathbb{Z}-\{-1,0\}\  t_1\neq t_2
	\end{eqnarray*}

\newpage
We determine the other points by using the intersection relations.
		\begin{eqnarray*}
		p_{11} &=& \ell_{11}\cap \ell_{21} = [0\ 0\ 1] \cap [1\ 0\ 0] = (0:1:0) \\
		p_{12} &=& \ell_{11}\cap \ell_{22} = [0\ 0\ 1] \cap [0\ 1\ 0] = (1:0:0) \\
		p_{13} &=& \ell_{11}\cap \ell_{43} = [0\ 0\ 1] \cap [k_2\ 1\ 1] = (1:-k_2:0) \\
		p_{14} &=& \ell_{11}\cap \ell_{44} = [0\ 0\ 1] \cap [1\ k_3\ 1] = (-k_3:1:0) \\
		p_{21} &=& \ell_{12}\cap \ell_{21} = [1\ 1\ 1] \cap [1\ 0\ 0] = (0:1:-1) \\
		p_{22} &=& \ell_{12}\cap \ell_{22} = [1\ 1\ 1] \cap [0\ 1\ 0] = (1:0:-1) \\
		p_{23} &=& \ell_{12}\cap \ell_{41} = [1\ 1\ 1] \cap [1\ 0\ k_1] = (-k_1:k_1-1:1) \\
		p_{24} &=& (1:s_2:-1-s_2) \\
		p_{31} &=& \ell_{21}\cap \ell_{44} = [1\ 0\ 0] \cap [1\ k_3\ 1] = (0:1:-k_3) \\
		p_{32} &=& \ell_{22}\cap \ell_{43} = [0\ 1\ 0] \cap [k_2\ 1\ 1] = (1:0:-k_2) \\
		p_{33} &=& (1:m_1:-1) \\
		p_{34} &=& \ell_{32}\cap \ell_{41} = [0\ 1\ 1] \cap [1\ 0\ k_1] = (k_1:1:-1) \\
		p_{41} &=&  (0:1:t_2)\\
		p_{42} &=& \ell_{22}\cap \ell_{41} = [0\ 1\ 0] \cap [1\ 0\ k_1] = (-k_1:0:1) \\
		p_{43} &=& \ell_{32} \cap \ell_{44} = [0\ 1\ 1] \cap [1\ k_3\ 1] = (k_3-1:-1:1) \\
		p_{44} &=& \ell_{31}\cap \ell_{43} = [1\ 0\ 1] \cap [k_2\ 1\ 1] = (-1:k_2-1:1) \\
	\end{eqnarray*}	
	
	We know the equations of the 9 lines up to here. We determine some of the other lines by using the points above. We need two points to determine a line. By using the extra points we get some new equations.

$\\ \underline{\ell_{13}} : $
	\begin{eqnarray}
		Let \ \ell_ {13} = [x: y: z] \in (\mathbb{P}^2)^*  \nonumber \\
		p_{31}, p_{32}, p_{33}, p_{34} \in \ell_{13} \nonumber \\
		y-k_3z=0 \\
		x-k_2z=0 \\
		x+m_1y-z=0 \\
		k_1x+y-z=0 \\
		By \ (3.1), \ if \ z=1 \Rightarrow y=k_3,\ by\  (3.2), \  x=k_2 \nonumber \\
		So\ \ell_{13}=[k_2: k_3: 1 ] \nonumber \\
		By \ (3.3) \ k_2+m_1k_3-1=0 \\
		By \ (3.4) \ k_1k_2+k_3-1=0 \\
		By \ (3.6)\  k_3 = 1-k_1k_2 
	\end{eqnarray}	

$\\ \underline{\ell_{14}} : $
	\begin{eqnarray}
		Let \ \ell_ {14} = [x: y: z] \in (\mathbb{P}^2)^*  \nonumber \\
		p_{41}, p_{42}, p_{43}, p_{44} \in \ell_{14} \nonumber \\
		y+t_2z=0 \\
		-k_1x+z=0 \\
		(k_3-1)x-y+z=0 \\
		-x+(k_2-1)y+z=0 \\
		By \ (3.9), \ if \ x=1 \Rightarrow z=k_1,\ by\  (3.8), \  y=-t_2k_1 \nonumber \\
		So\ \ell_{14}=[1: -t_2k_1: k_1 ] \nonumber \\
		By \ (3.10) \ (k_3-1)+t_2k_1+k_1=0 \\
		By \ (3.11) \ -1-(k_2-1)t_2k_1+k_1=0 
	\end{eqnarray}

$\\ \underline{\ell_{23}} : $
	\begin{eqnarray}
		Let \ \ell_ {23} = [x: y: z] \in (\mathbb{P}^2)^*  \nonumber \\
		p_{13}, p_{23}, p_{33}, p_{43} \in \ell_{23} \nonumber \\
		x-k_2y=0 \\
		-k_1x+(k_1-1)y+z=0 \\
		x+m_1y-z=0 \\
		(k_3-1)x-y+z=0 \\
		By \ (3.14), \ if \ y=1 \Rightarrow x=k_2 \nonumber \\
		By \ (3.15), \ z= k_1k_2-k_1+1  \nonumber\\
		So\ \ell_{23}=[k_2: 1: k_1k_2-k_1+1 ] \nonumber \\
		By \ (3.16) \ k_2+m_1-k_1k_2+k_1-1=0 \\
		By \ (3.17) \ k_1k_2+k_2k_3-k_1-k_2=0 
	\end{eqnarray}		
	
$\\ \underline{\ell_{34}} : $
	\begin{eqnarray}
		Let \ \ell_ {34} = [x:y:z] \in (\mathbb{P}^2)^*  \nonumber \\
		p_{14}, p_{23}, p_{32}, p_{41} \in \ell_{34} \nonumber \\
		-k_3x+y=0 \\
		-k_1x+(k_1-1)y+z=0 \\
		x-k_2z=0\\
		y+t_2z=0 \\
		By \ (3.22), \ if \ z=1 \Rightarrow x=k_2 \nonumber \\
		By \ (3.23), \ z= 1  \Rightarrow y=-t_2 \nonumber\\
		So\ \ell_{34}=[k_2: -t_2: 1 ] \nonumber \\
		By \ (3.21)\ k_1k_2+k_1t_2-t_2-1=0
	\end{eqnarray}	
	
$\\ \underline{\ell_{33}} : $
	\begin{eqnarray}
		Let \ \ell_ {33} = [x:y:z] \in (\mathbb{P}^2)^*  \nonumber \\
		p_{13}, p_{24}, p_{31}, p_{42} \in \ell_{33} \nonumber \\
		x-k_2y=0 \\
		x+s_2y+(-1-s_2)z=0 \\
		y-k_3z=0\\
		-k_1x+z=0 \\
		By \ (3.25), \ if \ y=1 \Rightarrow x=k_2 \nonumber \\
		By \ (3.28), \ z=k_1k_2 \nonumber\\
		So\ \ell_{33}=[k_2: 1: k_1k_2 ] \nonumber \\
		By \ (3.27)\ k_1k_2k_3=1
	\end{eqnarray}

	We get a contradiction as follows: 
	\begin{eqnarray}
		Using \ (3.7), \ (3.19) \ and \  k_1\neq 0 \ we\  get\  k_2^2-k_2+1=0 \\
		If\  we \ subtract\  (3.5)\  from \ (3.18)\ and\ using\ (3.7)\ we \ get\  m_1k_2-k_2+1=0 \\
		Using \ (3.7),\ (3.12) \ and \ k_1\neq 0\ we \ get \ t_2 = k_2-1 \\ 
		Using \ (3.13), \ (3.30) \ and\ (3.32)\ we \ get\ k_1+k_1k_2=1 \\
		Using\ (3.7)\ and\ (3.33) \ we \ get\ k_3=k_1 \\	
		Using \ (3.29) \ and\ (3.34)\ we \ get \ k_1^2k_2=1\ so\ that \ k_1k_2=\frac{1}{k_1} \\
		Using \ (3.24)\ and\ (3.32)\ we\ get\ 2k_1k_2-k_1-k_2=0 \\
		Using \ (3.33) \ and\ (3.36)\ we\ get\ k_2=2-3k_1 \\ 
		Using \ (3.37)\ and\ (3.30)\ we\ get\ 3k_1^2-3k_1+1=0 \\
		Using \ (3.33)\ and \ (3.35)\ we\ get\ k_1^2-k_1+1=0 \ which \ contradicts\  to \ (3.38) \nonumber
	\end{eqnarray}	
	
	This contradiction proves that there cannot be any (4,4)-nets in $\mathbb{C}P^2$.

\section{Uniqueness Of (4,3)-Net}

\subsection{Orthogonal Latin Squares Of Order 3}

Stipins proved in his thesis that there is a unique (4,3)-net \cite{stipins}. Here we give another proof by using tropical geometry. We need two orthogonal Latin squares of order 3 to construct an abstract (4,3)-net.
\begin{prop} The following is the unique pair of orthogonal Latin squares (OLS) of order 3 up to relabeling the numbers, and reordering rows and columns.
	
	\begin{center}
	$\left \{ \left[
            \begin{array}{ccc}
              1 & 2 & 3 \\
              2 & 3 & 1 \\
              3 & 1 & 2 \\
            \end{array}
          \right] \right.$
          $,
          \left. \left[
           \begin{array}{ccc}
              1 & 2 & 3 \\
              3 & 1 & 2 \\
              2 & 3 & 1 \\
            \end{array}
          \right] \right \} $
	\end{center}
	\end{prop}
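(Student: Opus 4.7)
The plan is to follow the same case-analysis strategy used in the proof of Proposition 3.1. The normalizations we may perform on a pair $(M,N)$ of orthogonal Latin squares of order 3 are: a common permutation of rows, a common permutation of columns, and independent relabelings of the symbols of $M$ and of $N$. I will use these freedoms to fix enough entries that the rest become forced.

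First I would handle $M$. By relabeling the symbols of $M$ I can assume the first row of $M$ is $(1,2,3)$, and by permuting rows I can further assume the first column of $M$ is $(1,2,3)^T$. The entry $M_{22}$ must differ from $M_{12}=2$ and from $M_{21}=2$, so $M_{22}\in\{1,3\}$; but $M_{22}=1$ would force $M_{23}\in\{1,2,3\}\setminus\{M_{21},M_{22},M_{13}\}=\{1,2,3\}\setminus\{2,1,3\}=\emptyset$, so $M_{22}=3$. After this, every remaining entry of $M$ is forced by the Latin square condition, and $M$ equals the circulant shown in the statement.

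Next I would normalize $N$ by relabeling its symbols so that its first row is $(1,2,3)$ (this is compatible with the normalization of $M$, since the two relabelings are independent). Then $N_{21}\in\{2,3\}$. If $N_{21}=2$, the Latin condition forces column 1 of $N$ to be $(1,2,3)^T$, and then the only way to complete row 2 without repeating column symbols is $N_{22}=3$, $N_{23}=1$, after which the rest of $N$ is forced and one obtains $N=M$; but then the pairs $(M_{ii},N_{ii})$ for $i=1,2,3$ are $(1,1),(2,2),(3,3)$ together with further repeats such as $(M_{21},N_{21})=(2,2)$, contradicting orthogonality. (The other subcase $N_{22}=1$ with $N_{21}=2$ produces two $3$'s in column~2 or row~3, again a contradiction.) Hence $N_{21}=3$.

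With $N_{11}=1$, $N_{12}=2$, $N_{13}=3$ and $N_{21}=3$ fixed, the Latin square property forces $N_{22}=1$, $N_{23}=2$, $N_{31}=2$, $N_{32}=3$, $N_{33}=1$, which is exactly the second matrix in the statement. A direct check confirms that all nine pairs $(M_{ij},N_{ij})$ are distinct, so $(M,N)$ is indeed orthogonal. I do not expect any substantial obstacle; the only nontrivial step is bookkeeping the $N_{21}=2$ branch to rule out a second orthogonal pair, which is essentially the analogue of the Latin square ruling-out argument carried out for order 4 in Proposition 3.1.
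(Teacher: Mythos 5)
Your argument is correct and follows essentially the same normalization and case analysis as the paper's proof: fix the first row and first column of $M$ and the first row of $N$, rule out $N_{21}=2$, and note that all remaining entries are then forced. (Two minor remarks: the $N_{21}=2$ branch dies immediately because $(M_{21},N_{21})=(2,2)$ would duplicate the pair at position $(1,2)$, so there is no need to complete $N$ first; and in your parenthetical subcase the repeated symbol $3$ actually lands in column $3$, not ``column 2 or row 3''.)
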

	
	\begin{proof}
	Without loss of generality, we may assume 
		\begin{center}
	M=$\left[
            \begin{array}{ccc}
              1 & 2 & 3 \\
              2 &  & \\
              3 &  &  \\
            \end{array}
          \right]$ and N=
          $
        \left[
           \begin{array}{ccc}
              1 & 2 & 3 \\
               &  &  \\
               &  &  \\
            \end{array}
          \right] $
	\end{center}
	
	$M_{21}=2, \ N_{11}=1 \Rightarrow N_{21}=3$, $N_{22}=1$ and $N_{23}=2$.\\
	The other entries are straightforward.
\end{proof}

\newpage

\subsection{The Incidence Structure Of The Possible (4,3)-Net}	
		
		Suppose that we have a hypothetical (4,3)-net ($\mathcal{A}_1,\mathcal{A}_2,\mathcal{A}_3,\mathcal{X}$) in $\mathbb{C}P^2$. Denote the sets of its lines by 	
		$$
		\mathcal{A}_k = \{ \ell_{k1}, \ell_{k2}, \ell_{k3} \} \ where \ k \in \{1,2,3\}
		$$
		and $\mathcal{X}= \{p_{ij} \} \ where \ i,j\in \{1,2,3 \} $

		and the points are labeled as $p_{ij}=\ell_{1i}\cap \ell_{2j}$.

		Then by regarding the OLS of order 3 we find the incidence relations,		
		$$ p_{ij} = \ell_{1i}\cap \ell_{2j} \cap \ell_{3M_{ij}} \cap \ell_{4N_{ij}} $$	
		
\subsection{A Tropicalization Of The Possible (4,3)-Net}	

We use a method that is similar to the method explained in \textit{Section 3.3} to tropicalize the possible (4,3)-net. We find a transformation between the lines $\ell_{11}, \ell_{12}, \ell_{21}$ and $\ell_{22}$ and $z=0$, $x+y+z=0,\ x=0$ and $y=0$ respectively. Then we find the new locations of the points $p_{11}, p_{12}, p_{21}$ and $p_{22}$ after the transformation. 

	\begin{eqnarray*}
	p_{11} &=& \ell_{11} \cap \ell_{21} = (0:1:0)  \\
	p_{12} &=& \ell_{11} \cap \ell_{22} = (1:0:0)  \\ 
	p_{21} &=& \ell_{12} \cap \ell_{21} = (0:1:-1)   \\
	p_{22} &=& \ell_{12} \cap \ell_{22} = (1:0:-1)  
	\end{eqnarray*}
	
Also, we know the equations of the lines $\ell_{32}$ and $\ell_{41}$. Since $\ell_{32}$ passes through the points $p_{12}$ and $p_{21}$, the equation of $\ell_{32}$ is $y+z=0$. Similarly $\ell_{41}$ passes through $p_{11}$ and $p_{22}$, therefore its equation is $x+z=0$. Then $p_{33}$ would be $(1:1:-1)$, since it is at the intersection of $\ell_{32}$ and $\ell_{41}$.

\subsubsection{The Tropicalization Of The Lines and The Points}	
	We use the same matrix 
		$$
           T=\left[
             \begin{array}{ccc}
              t-t^2 & t^2-t^4 & t^4 \\
              t^3+t^2 & 1 & -t^3 \\
              t^2 & t^5 & 1\\
            \end{array}
          \right] $$
	as in \textit{Section 3.3.1} to tropicalize the lines. By \textit{Proposition 3.2} the matrix above has nonzero determinant except for finitely many values of t. The possible coordinates of the center of a line that is the transformation of $ax+by+cz=0$ is given in \textit{Lemma 3.1}. Also, the possible coordinates for a point that is the transformation of $p_{ij}=(a:b:c)$ is given in \textit{Lemma 3.3}.

\subsubsection{Determining Other Lines And Points}
After the tropicalization, the lines and the points are are as follows:
			$$
				\left.
					\begin{matrix}
						& \L_{11} & \L_{12} & \L_{21} & \L_{22} & \L_{31} & \L_{32} \\		
						\left[
						\begin{matrix} \\ \\ \end{matrix} \right. &
						\begin{matrix} -4 \\ -3 \end{matrix}&
						\begin{matrix} 4 \\ 3 \end{matrix} &
						\begin{matrix} 0 \\ -1 \end{matrix}&
						\begin{matrix} 1 \\ 5 \end{matrix} &
						\begin{matrix} 3 \\ 2 \end{matrix}&
						\begin{matrix} -2 \\ 0 \end{matrix} &
						\left.
						\begin{matrix} \\ \\ \end{matrix} \right] &		
					\end{matrix}
				\right.
			$$ 
			and 
			\begin{eqnarray*}
			P_{11} &=& (-2,-1) \\
			P_{12} &=& (1,2) \\
			P_{21} &=& (4,3) \\
			P_{22} &=& (1,3) \\
			P_{33} &=& (0,2)
 			\end{eqnarray*}

These are given in \textit{Figure 8}.		
%\newpage
			
\begin{figure}[h] \begin{center} \resizebox{8cm}{6cm} {\includegraphics{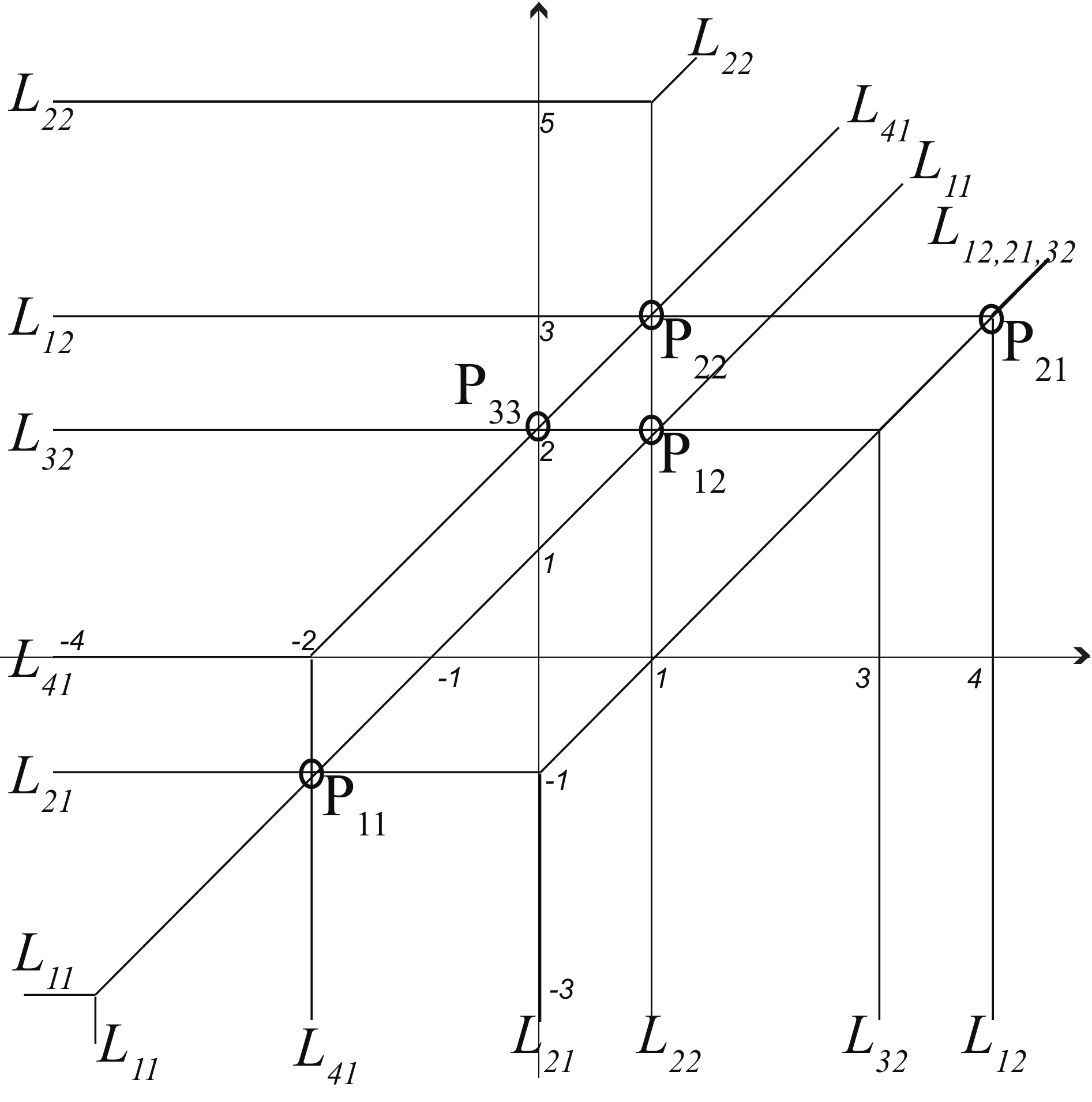}} \caption{Some of the lines and points of the (4,3)-net} \label{fig:fourthree} \end{center}\end{figure}

We want to determine the center of the line $\L_{31}$. The line $\L_{31}$ passes through $P_{11}$. The only possible centers for a line passing through $P_{11}$ are $(-2,-1), (-2,0), (0,-1)$, $(-4,-3)$. Considering the point-line table, if a line has a center at $(-2,0)$ then its equation is $x+z=0$, so the line is $\ell_{41}$. Similarly if a line has a center at $(0,-1)$ then its equation is $x=0$, so the line is $\ell_{21}$. If a line has a center at $(-4,-3)$ then its equation is $z=0$, so the line is $\ell_{11}$. Therefore the only possible center for $\L_{31}$ is $(-2,-1)$ and the equation of $\ell_{31}$ is $x+k_1z=0$ where $k_1\in \mathbb{Z}-\{0,1\}$.

Similarly considering $P_{22}$ on $\L_{33}$, we get that the center of $\L_{33}$ is $(1,3)$ and the equation of $\ell_{33}$ is $x+k_3y+z=0$ where $k_3\in \mathbb{Z}-\{0,1\}$.

If we make similar calculations for $\L_{43}$, by using that $P_{21}$ lies on $\L_{43}$ we get that the center of $\L_{43}$ is $(3,2)$ and the equation of $\ell_{43}$ is $k_2x+y+z=0$ where $k_2 \in \mathbb{Z}-\{0,1\}$.
\newpage

Up to now we know the following:

	\begin{eqnarray*}
		\ell_{11} &:& z = 0  \\
		\ell_{12} &:&  x+y+z=0 \\
		\ell_{21} &:& x=0 \\
		\ell_{22} &:& y=0 \\
		\ell_{31} &:& x+k_1z= 0 \\
		\ell_{32} &:& y+z = 0 \\
		\ell_{33} &:& x+k_3y+z = 0 \\
		\ell_{41} &:& x+z=0 \\
		\ell_{43} &:& k_2x+y+z=0 \\
	\end{eqnarray*}			
	\begin{eqnarray*}
		p_{11} &:& (0:1:0)  \\
		p_{12} &:&  (1:0:0) \\
		p_{21} &:& (0:1:-1) \\
		p_{22} &:& (1:0:-1) \\
		p_{33} &:& (1:1:-1) \\
	\end{eqnarray*}
	
	$$ k_1,k_2,k_3\in \mathbb{Z}-\{0,1\} $$

\newpage

We determine the other points and lines.

$\\ \underline{p_{13}} : $
	\begin{eqnarray}
		Let \ p_{13} = (x: y: z) \in \mathbb{P}^2  \nonumber \\
		p_{13} = \ell_{11}\cap \ell_{23}\cap \ell_{33}\cap \ell_{43} \nonumber \\
		z=0  \\
		x+k_3y+z=0 \\
		k_2x+y+z=0 \\
		By \ (4.1)\ and\ (4.2) \ if \ y=1 \Rightarrow x=-k_3 \nonumber \\
		So\ p_{13}=(-k_3: 1:0) \nonumber \\
		By \ (4.3) \ k_2k_3=1 \nonumber \\ 
		Since \ k_2\neq 0 \ k_3=\frac{1}{k_2} \\
		By\ (4.4)\ p_{13}=(-1:k_2:0) \nonumber
	\end{eqnarray}	
	
$\\ \underline{p_{32}} : $
	\begin{eqnarray}
		Let \ p_{32} = (x: y: z) \in \mathbb{P}^2  \nonumber \\
		p_{13} = \ell_{13}\cap \ell_{22}\cap \ell_{31}\cap \ell_{43} \nonumber \\
		y=0 \\
		x+k_1z=0 \\
		k_2x+y+z=0 \\
		By \ (4.6) \ if \ z=1 \Rightarrow x=-k_1 \nonumber \\
		So\ p_{13}=(-k_1: 0:1) \nonumber \\
		By \ (4.7) \ k_1k_2=1 \nonumber \\
		Since \ k_2\neq 0 \ k_1=\frac{1}{k_2} \\
		By \ (4.8)\ p_{32} = (-1:0:k_2)\nonumber 
	\end{eqnarray}	
	
$\\ \underline{p_{23}} : $
	\begin{eqnarray}
		Let \ p_{23} = (x: y: z) \in \mathbb{P}^2  \nonumber \\
		p_{23} = \ell_{12}\cap \ell_{23}\cap \ell_{31}\cap \ell_{42} \nonumber \\
		x+y+z=0 \\
		x+k_1z=0 \\
		By \ (4.9)\ and\ (10) \ if \ z=1 \Rightarrow x=-k_1,\ y=k_1-1 \nonumber \\
		So\ p_{23}=(-k_1: k_1-1:1) \nonumber \\
		By \ (4.8)\ p_{23}=(-1:1-k_2:k_2) \nonumber
	\end{eqnarray}	
	
$\\ \underline{p_{31}} : $
	\begin{eqnarray}
		Let \ p_{31} = (x: y: z) \in \mathbb{P}^2  \nonumber \\
		p_{31} = \ell_{13}\cap \ell_{21}\cap \ell_{33}\cap \ell_{42} \nonumber \\
		x=0 \\
		x+k_3y+z=0 \\
		By \ (4.7)\ and\ (4.8) \ if \ y=1 \Rightarrow z=-k_3 \nonumber \\
		So\ p_{31}=(0:1:-k_3) \nonumber \\
		By\ (4.4) \ p_{31}=(0:k_2:-1) \nonumber
	\end{eqnarray}	

$\\ \underline{\ell_{13}} : $
	\begin{eqnarray}
		Let \ \ell_ {13} = [x: y: z] \in (\mathbb{P}^2)^*  \nonumber \\
		p_{31}, p_{32}, p_{33} \in \ell_{13} \nonumber \\
		k_2y-z=0 \\
		-x+k_2z=0 \\
		x+y-z=0 \\
		By \ (4.13), \ if \ y=1 \Rightarrow z=k_2,\ by\  (4.14), \  x=k_2^2 \nonumber \\
		So\ \ell_{4.13}=[k_2^2: 1: k_2 ] \nonumber \\
		By \ (4.15) \ k_2^2-k_2+1=0
	\end{eqnarray}	

$\\ \underline{\ell_{23}} : $
	\begin{eqnarray}
		Let \ \ell_{23} = [x: y: z] \in (\mathbb{P}^2)^*  \nonumber \\
		p_{13}, p_{23}, p_{33} \in \ell_{23} \nonumber \\
		-x+k_2y=0 \\
		-x+(1-k_2)y+k_2z=0 \\
		x+y-z=0 \\
		By \ (4.17), \ if \ y=1 \Rightarrow x=k_2,\ by\  (4.19), \  z=k_2+1 \nonumber \\
		So\ \ell_{23}=[k_2: 1: k_2+1 ] \nonumber \\
		(4.18)\ is \ satisfied\ since \ k_2^2-k_2+1=0 \nonumber
	\end{eqnarray}	

$\\ \underline{\ell_{42}} : $
	\begin{eqnarray}
		Let \ \ell_{42} = [x: y: z] \in (\mathbb{P}^2)^*  \nonumber \\
		p_{12}, p_{23}, p_{31} \in \ell_{42} \nonumber \\
		x=0 \\
		-x+(1-k_2)y+k_2z=0 \\
		k_2y-z=0 \\
		By \ (4.22), \ if \ y=1 \Rightarrow z=k_2 \nonumber \\
		So\ \ell_{42}=[0: 1: k_2] \nonumber \\
		(4.21)\ is \ satisfied\ since \ k_2^2-k_2+1=0 \nonumber
	\end{eqnarray}	
	
The equation has two solutions $\frac{1\mp \sqrt{-3}}{2}$, so we obtain two (4,3)-nets. Complex conjugation gives us an isomorphism between these two. Hence up to isomorphism there exists a unique (4,3)-net. The lines and the points of the (4,3)-net are given below:

	\begin{eqnarray*}
		\ell_{11} &:& z = 0  \\
		\ell_{12} &:&  x+y+z=0 \\
		\ell_{13} &:& k_2^2x+y+k_2z=0 \\ 
		\ell_{21} &:& x=0 \\
		\ell_{22} &:& y=0 \\
		\ell_{23} &:& k_2x+y+(k_2+1)z=0 \\
		\ell_{31} &:& k_2x+z= 0 \\
		\ell_{32} &:& y+z = 0 \\
		\ell_{33} &:& k_2x+y+k_2z = 0 \\
		\ell_{41} &:& x+z=0 \\
		\ell_{42} &:& y+k_2z=0 \\
		\ell_{43} &:& k_2x+y+z=0 \\
	\end{eqnarray*}			
	\begin{eqnarray*}
		p_{11} &:& (0:1:0)  \\
		p_{12} &:&  (1:0:0) \\
		p_{13} &:& (-1:k_2:0) \\
		p_{21} &:& (0:1:-1) \\
		p_{22} &:& (1:0:-1) \\
		p_{23} &:& (-1:1-k_2:k_2) \\
		p_{31} &:& (0:k_2:-1) \\
		p_{32} &:& (-1:0:k_2) \\
		p_{33} &:& (1:1:-1) \\
	\end{eqnarray*}
	
	where $k_2^2-k_2+1=0$

\end{document}